\newcommand{\g}{\frak{g}}
\newcommand{\s}{\frak{s}}
\newcommand{\x}{\mathbf{x}}
\newcommand{\y}{\mathbf{y}}
\newcommand{\z}{\mathbf{z}}
\newcommand{\A}{\mathcal{A}}
\newcommand{\M}{\mathcal{M}}
\renewcommand{\P}{\mathcal{P}}
\newcommand{\T}{\mathcal{T}}
\newcommand{\Hom}{\mathrm{Hom}}
\newcommand{\wt}{\widetilde}
\newcommand{\Ass}{\mathcal{A}ss}
\newcommand{\Com}{\mathcal{C}om}
\newcommand{\Lie}{\mathcal{L}ie}
\newcommand{\Zinb}{\mathcal{Z}inb}
\newcommand{\p}{\prime}
\newcommand{\pa}{\partial}
\renewcommand{\c}{\circ}
\newcommand{\ot}{\otimes}
\newcommand{\ti}{\tilde}
\newcommand{\<}{\langle}
\renewcommand{\>}{\rangle}
\newtheorem{definition}{Definition}[section]
\newtheorem{lemma}[definition]{Lemma}
\newtheorem{proposition}[definition]{Proposition}
\newtheorem{theorem}[definition]{Theorem}
\newtheorem{corollary}[definition]{Corollary}
\date{}
\begin{document}

\title{
Formal symplectic geometry for Leibniz algebras
}
\author{K. UCHINO}
\maketitle
\abstract{
We study a formal symplectic geometry
for anticyclic Leibniz operad and its Koszul dual operad.
}
%\tableofcontents
%%%%%%%%%%%%%%%%%%%%%%
\section{Introduction}
%%%%%%%%%%%%%%%%%%%%%%

Let $\g$ be a finite dimensional Lie algebra.
In symplectic or Poisson geometry,
the Lie algebra structure on $\g$
is characterized as an odd Hamilton function,
$\theta_{Lie}$, over an even symplectic
plane, $\T^{*}\Pi\g=\Pi(\g\times\g^{*})$,
satisfying a Maurer-Cartan equation
$\{\theta_{Lie},\theta_{Lie}\}=0$
(Kosmann-Schwarzbach \cite{KosmannS},
see also Roytenberg \cite{Roy}).
Here $\{.,.\}$ is the canonical Poisson bracket
defined on the symplectic plane.
The Hamiltonian system $(\T^{*}\Pi\g,\theta_{Lie})$
defines a classical theory which should be quantized.
We consider a noncommutative version
of this Hamiltonian formalism.
The noncommutative Lie algebra is known as
a Leibniz algebra (Loday \cite{Loday}).
A Leibniz algebra is a vector space equipped with
a noncommutative binary bracket satisfying
the Leibniz identity
$$
[x_{1},[x_{2},x_{3}]]=[[x_{1},x_{2}],x_{3}]+
[x_{2},[x_{1},x_{3}]].
$$
The main aim of this note is to construct
a Hamiltonian system which characterizes
the finite dimensional Leibniz algebra.
Since the Leibniz algebra is noncommutative and
nonassociative, the ordinary manifold,
whether graded or not, is useless for our aim.
So we will use the theory of formal operad-geometry
introduced by Kontsevich \cite{Kont}.
According to Getzler-Kapranov \cite{GetKap},
the formal operad-geometry is a part of
cyclic (co)homology theory.
In general, if $\P$ is a cyclic binary quadratic operad,
then a cyclic (co)homology group is well-defined
in the category of $\P$-algebras.
Kontsevich proved in the cases of $\P=\Com,\Lie,\Ass$
that if $\A$ is a finite dimensional $\P$-algebra,
then the cyclic cohomology theory over $\A$
can be interpreted as a formal symplectic
geometry via the Koszul duality theory.\\
\indent
We consider the case of Leibniz operad.
It is known that the Leibniz operad is anti-cyclic,
although not cyclic (Chapoton \cite{Chap}).
Hence one can construct an anti-cyclic cohomology theory
in the category of Leibniz algebras (Uchino \cite{Uchi}).
We will prove that if $\g$ is a finite dimensional
Leibniz algebra, then the anti-cyclic cohomology theory
over $\g$ can be interpreted as a formal symplectic
geometry as with the cyclic case.\\
\indent
If $\P$ is a cyclic (resp. anti-cyclic)
binary quadratic operad,
then a finite generated
(co)free $\P^{!}$-(co)algebra is
a formal $\P^{!}$-manifold.
Here $\P^{!}$ is the Koszul dual of $\P$.
In the category of $\P^{!}$-manifolds
(so-called $\P^{!}$-world),
a ``Lie" algebra is a $\P$-algebra
and a formal function over a $\P^{!}$-manifold
is a cyclic (resp. anti-cyclic)
cochain in the category of $\P$-algebras.
Roughly speaking, a $\P^{!}$-manifold
is a cyclic (resp. anti-cyclic)
cohomology complex in the category of
$\P$-algebras.
The case of $\P^{!}=\Com$ and $\P=\Lie$
is the classical case above.\\
\indent
It is well-known that the Koszul dual operad
of the Leibniz operad is the Zinbiel operad (\cite{Loday}).
The quadratic relation of $\Zinb$ is
$$
x_{1}*(x_{2}*x_{3})=(x_{1}*x_{2})*x_{3}+(x_{2}*x_{1})*x_{3}.
$$
We call the Zinbiel world a Loday world.
In general, a formal function in $\P^{!}$-world
is expressed as the universal invariant bilinear form
defined on the free $\P^{!}$-algebra.
Hence our main problem is to give a tensor expression
of the universal invariant bilinear form on the
free Zinbiel algebra.
The tensor expression of the bilinear form
will be used to define
the canonical Poisson bracket in the Loday world.
We will see that the structure of a finite dimensional
Leibniz algebra is a formal function $\mu$
satisfying $\{\mu,\mu\}=0$,
where $\{.,.\}$ is a canonical Poisson bracket
in the Loday world.\\
\indent
As an application of the formal symplectic geometry,
we will study a metric tensor defined on a Leibniz algebra.
In terms of generalized geometry (Hitchin \cite{Hitchin}),
a Leibniz algebra is considered to be
a ``generalized Lie algebra".
It is known that a natural metric tensor $g(.,.)$
defined on a generalized Lie algebra
(=Leibniz algebra) satisfies
\begin{equation}\label{igsi}
g([x_{1},x_{2}],x_{3})+g(x_{2},[x_{1},x_{3}])
=g(x_{1},x_{2}\c x_{3}),
\end{equation}
where $[.,.]$ is a Leibniz bracket
and $x_{2}\c x_{3}:=[x_{2},x_{3}]+[x_{3},x_{2}]$.
In the classical world, a metric tensor is not
function on the symplectic plane $\T^{*}\Pi\g$,
because $C^{\infty}(\T^{*}\Pi\g)=\bigwedge(\g\oplus\g^{*})$.
On the other hand, in the Loday world,
a symmetric 2-tensor is a super function
on the formal symplectic plane.
This is an advantage that the Loday world has
over the classical one.
We will prove that (\ref{igsi}) is equivalent with
an invariant condition, that is, $\{\mu,g\}=0$.

%%%%%%%%%%
%\noindent
%\textbf{Acknowledgement}.
%I would like to thank...
%%%%%%%%%%%%%%%%%%%%%%%%%%%%%%%%%%%%%%
\section{Leibniz and Zinbiel algebras}
%%%%%%%%%%%%%%%%%%%%%%%%%%%%%%%%%%%%%%

%\textbf{Notation and Assumptions}.
%In this note all algebraic objects are defined
%over a fixed field $\mathbb{K}$.
%We denote by $\Pi$ the parity change operation.
%If $V$ is an even (resp. odd) vector space,
%then $\Pi V$ is an odd (resp. even) space.

A (left-)Leibniz algebra is a vector space $\g$
equipped with a binary bracket $[.,.]$
satisfying the Leibniz identity,
$$
[x_{1},[x_{2},x_{3}]]=[[x_{1},x_{2}],x_{3}]+
[x_{2},[x_{1},x_{3}]],
$$
where $x_{1},x_{2},x_{3}\in\g$.
A Zinbiel algebra
is a vector space equipped with
a binary product satisfying
$$
x_{1}*(x_{2}*x_{3})=(x_{1}*x_{2}+x_{2}*x_{1})*x_{3}.
$$
The operad of Zinbiel algebras is the Koszul dual
of the one of Leibniz algebras.
The Leibniz algebra and the Zinbiel algebra
are introduced and studied deeply
by Loday (\cite{Loday}).
Hence they are called Loday type algebras.\\
\indent
Let $V$ be a vector space.
The free Zinbiel algebra over $V$ is the
tensor space $\bar{T}V:=\bigoplus_{n\ge 1}V^{\ot n}$,
whose Zinbiel product is given by
$$
(...((x_{1}*x_{2})*x_{3})*\cdots)*x_{n}=
x_{1}\ot x_{2}\ot x_{3}\ot\cdots\ot x_{n}.
$$
For example,
$x_{1}*(x_{2}*x_{3})=
(x_{1}\ot x_{2}-x_{2}\ot x_{1})\ot x_{3}$.
By the universality of the free algebra,
for any Zinbiel algebra $(Z,*)$
and for any linear map $f:V\to Z$,
there exists a unique Zinbiel algebra morphism
$\hat{f}:\bar{T}V\to Z$ such that
the following diagram is commutative
$$
\xymatrix{
V\ar[r]^{\subset}\ar[dr]_{f} &
\bar{T}V\ar[d]^{\hat{f}}\\
& Z.
}
$$
\begin{lemma}
If $\x:=x_{1}\ot\cdots\ot x_{a}$
and $\y:=y_{1}\ot\cdots\ot y_{b}$, then
$$
sh(\x,\y)=\x*\y+\y*\x,
$$
where $sh(\x,\y)$
is the shuffle product of $\x$ and $\y$.
\end{lemma}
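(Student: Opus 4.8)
The plan is to prove the identity by induction on the total length $n=a+b$, exploiting the recursive nature of both products. First I record two facts. Since every tensor is a left-normed $*$-product of its letters, the defining formula for $\bar{T}V$ shows that right multiplication by a single generator $z\in V$ is simply concatenation, $\mathbf{u}*z=\mathbf{u}\ot z$ for every $\mathbf{u}\in\bar{T}V$. Second, I will use that the shuffle product is determined by the last-letter recursion
$$
sh(\x,\y)=sh(x_{1}\ot\cdots\ot x_{a-1},\y)\ot x_{a}
+sh(\x,y_{1}\ot\cdots\ot y_{b-1})\ot y_{b},
$$
together with its commutativity $sh(\x,\y)=sh(\y,\x)$ and the convention that an empty argument is dropped.

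For the inductive step, write $\y=\y'\ot y_{b}=\y'*y_{b}$ with $\y'=y_{1}\ot\cdots\ot y_{b-1}$ and apply the Zinbiel relation to the triple $(\x,\y',y_{b})$, using that right multiplication by $y_{b}$ is concatenation:
$$
\x*\y=\x*(\y'*y_{b})=(\x*\y')*y_{b}+(\y'*\x)*y_{b}
=\big(\x*\y'+\y'*\x\big)\ot y_{b}.
$$
The bracket is a symmetrized product of the strictly shorter pair $(\x,\y')$, so the induction hypothesis gives $\x*\y=sh(\x,\y')\ot y_{b}$; running the same argument on $\y*\x$ and using the symmetry of $sh$ gives $\y*\x=sh(\x',\y)\ot x_{a}$ with $\x'=x_{1}\ot\cdots\ot x_{a-1}$. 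Adding these two expressions and comparing with the last-letter recursion displayed above completes the step, the base case $a=b=1$ being immediate; the degenerate cases $a=1$ or $b=1$ are covered by reading a concatenation $\mathbf{u}\ot z$ as $sh(\mathbf{u},\emptyset)\ot z$.

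The point that requires care — and that forces the induction to be on $a+b$ rather than on one factor alone — is that the Zinbiel reduction of $\x*\y$ produces the term $\y'*\x$, whose second argument is the \emph{un}shortened word $\x$; only the total length has dropped. One must also note that the Zinbiel relation is legitimately applied to arbitrary elements of $\bar{T}V$ (not just generators), since $\bar{T}V$ is a Zinbiel algebra, and that the recursion chosen for $sh$ is the one on the last letter, so that it matches the concatenation coming out of right multiplication. An equivalent, slightly cleaner packaging first establishes the half-shuffle formula $\x*\y=sh(x_{1}\ot\cdots\ot x_{a},y_{1}\ot\cdots\ot y_{b-1})\ot y_{b}$ by the same induction, whereupon the Lemma follows in two lines from the last-letter recursion and the commutativity of $sh$.
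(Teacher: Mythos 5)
Your proof is correct, and in fact it supplies something the paper omits: the lemma is stated there with no proof at all (it is quoted as a known fact from Loday's theory of Zinbiel algebras), so there is no argument of the paper's to compare yours against. Your two ingredients are both legitimate readings of the paper's definition of the free Zinbiel algebra: right multiplication by a generator is concatenation (immediate, since every pure tensor is a left-normed $*$-word), and the Zinbiel relation may be applied to arbitrary elements of $\bar{T}V$, not just generators; combined with the last-letter recursion for $sh$, your induction on the total length $a+b$ closes correctly, and your observation that the reduction of $\x*\y$ produces $\y'*\x$ with the \emph{unshortened} word $\x$ --- so that induction on a single factor would not suffice --- is exactly the point that needs care. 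It is also worth noting that the half-shuffle formula you derive along the way, $\x*\y=sh(\x,\y^{b-1})\ot y_{b}$, is precisely the form in which the paper later invokes this lemma (Section 3.2, in verifying the invariance relation (\ref{zinv1}) for the universal pairing), so your packaging dovetails with the paper's subsequent computations. One caveat that is the paper's inconsistency rather than yours: the example $x_{1}*(x_{2}*x_{3})=(x_{1}\ot x_{2}-x_{2}\ot x_{1})\ot x_{3}$ displayed just before the lemma carries a super sign (appropriate to the odd variables of $\Pi\g$ used later) that contradicts the unsigned Zinbiel relation; your proof establishes the unsigned identity, which is what the lemma actually asserts.
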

\indent
The cofree Zinbiel coalgebra over $\Pi\g$ is
the tensor space $\bar{T}^{c}\Pi\g=\bar{T}\Pi\g$,
whose coproduct is defined by
$$
\Delta(x_{1},...,x_{n}):=
\sum_{\substack{1\le i\le n-1 \\ \sigma}}
(-1)^{\sigma}(x_{\sigma(1)},...,x_{\sigma(i)})\ot
(x_{\sigma(i+1)},...,x_{\sigma(n-1)},x_{n}),
$$
where $\sigma$ is an $(i,n-1-i)$-unshuffle permutation, i.e.,
$\sigma(1)<\cdots<\sigma(i)$ and
$\sigma(i+1)<\cdots<\sigma(n-1)$.
Let $B:\bar{T}^{i}\Pi\g\to\Pi\g$ be an $i$-ary linear map
on $\Pi\g$.
The map is identified with a coderivation
on the coalgebra. The defining identity of
the coderivation is as follows. If $n\ge i$,
\begin{multline}\label{defcoderi}
B(x_{1},...,x_{n})=\sum_{j,\sigma}
(-1)^{\sigma}(-1)^{(i+1)j}\\
x_{\sigma(1)}\ot\cdots\ot x_{\sigma(j)}\ot
B(x_{\sigma(j+1)},...,x_{\sigma(j+i-1)},x_{i+j})\ot
x_{i+j+1}\ot\cdots\ot x_{n},
\end{multline}
where $\sigma$ is a $(j,i-1)$-unshuffle permutation
and the parity of $B$ is $i+1$.
The space of the coderivations becomes a Lie algebra
via the commutator.
Hence, the space of the multilinear maps,
$$
C_{Leib}^{\bullet}(\g):=\Hom(\bar{T}^{c}\Pi\g,\Pi\g),
$$
is also a Lie algebra. If $B:=[.,.]$ is binary
and if $B$ is a solution of $BB=0$,
then $(\g,B)$ becomes a Leibniz algebra and
$(C_{Leib}^{\bullet}\g,B)$
is the cohomology complex of Loday-Pirashvili \cite{LP}.
We call a cochain $B\in C_{Leib}^{\bullet}(\g)$
a bar-cochain.
\medskip\\
\indent
An invariant bilinear form in the category
of Leibniz algebras is an anti-symmetric 2-form,
$\<x_{1},x_{2}\>=-\<x_{2},x_{1}\>$, satisfying
\begin{eqnarray}
\label{invL1}
\<x_{1},[x_{2},x_{3}]\>&=&-\<[x_{2},x_{1}],x_{3}\>,\\
\label{invL2}
\<x_{1},[x_{2},x_{3}]\>&=&
\<[x_{1},x_{3}]+[x_{3},x_{1}],x_{2}\>.
\end{eqnarray}
Suppose that $\g$ is a finite dimensional Leibniz algebra.
Let $\g^{*}$ be the dual space of $\g$.
The coadjoint representation of $\g$ by $\g^{*}$
is defined by
\begin{eqnarray}\label{coadction}
\label{coadaction1}
\<x_{1},[x_{2},a]\>&=&-\<[x_{2},x_{1}],a\>,\\
\label{coadaction2}
\<x_{1},[a,x_{2}]\>&=&\<[x_{2},x_{1}]+[x_{2},x_{1}],a\>,
\end{eqnarray}
where $a\in\g^{*}$
and $\<.,.\>$ is the canonical pairing of
$\g$ and $\g^{*}$.
The double space $\g\oplus\g^{*}$ is a symplectic plane,
whose symplectic structure is defined by
\begin{equation}\label{defomega}
\omega(x_{1}+a_{1},x_{2}+a_{2}):=
\<x_{1},a_{2}\>-\<x_{2},a_{1}\>.
\end{equation}
The semi-direct product algebra $\g\ltimes\g^{*}$
is a Leibniz algebra satisfying
the invariant condition (\ref{invL1})-(\ref{invL2})
with respect to $\omega$.\\
\indent
An invariant bilinear form
in the category of Zinbiel algebras
is an anti-symmetric 2-form,
$\<x_{1},x_{2}\>=-\<x_{2},x_{1}\>$, satisfying
\begin{eqnarray}
\label{zinv1}\<x_{1}*x_{2},x_{3}\>&=&\<x_{3}*x_{2},x_{1}\>,\\
\label{zinv2}\oint\<x_{1}*x_{2},x_{3}\>&=&0,
\end{eqnarray}
where $\oint$ is the cyclic summation for $1,2,3$.
The defining relations of the invariant bilinear forms
were introduced by Chapton \cite{Chap}.

%%%%%%%%%%%%%%%%%%%%%%%%%%%%%%%%%%%%%%%%%%%%%%%%%%%
\section{Anticyclic Leibniz operad}
%%%%%%%%%%%%%%%%%%%%%%%%%%%%%%%%%%%%%%%%%%%%%%%%%%%

\subsection{anticyclic cochains}

By definition,
an anticyclic $n-1$-cochain over $\g$ is
an $n$-ary linear function on $\bar{T}\Pi\g$ such that
\begin{equation}\label{defaccochain}
A(x_{1},...,x_{n})
=\frac{1}{n}A\Big([x_{1},[x_{2},...,[x_{n-1},x_{n}]]]\Big),
\end{equation}
where $[.,.]$ is the free Lie bracket, or commutator,
over $\Pi\g$. For example,
$$
A(x_{1},x_{2},x_{3})=\frac{1}{3}\big(
A(x_{1},x_{2},x_{3})+A(x_{1},x_{3},x_{2})
-A(x_{2},x_{3},x_{1})-A(x_{3},x_{2},x_{1})\big),
$$
where $|x_{i}|=odd$ for each $i\in\{1,2,3\}$.
We sometimes call the anticyclic cochain
an ac cochain for short.
In \cite{Uchi} it was proved that
the set of anticyclic cochains becomes a subcomplex
of the cohomology complex of Leibniz algebra.\\
\indent
We prove that the space of anticyclic cochains
over a symplectic plane becomes an even Lie algebra.
\begin{lemma}
The free Lie algebra over $\Pi\g$ is stable
for the coderivations.
\end{lemma}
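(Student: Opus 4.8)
The plan is to dualise the whole situation and reduce the statement to an elementary property of derivations of the free Zinbiel algebra, using exactly the shuffle Lemma stated just above. Since $\g$ is finite dimensional, in each weight $n$ the space $(\Pi\g)^{\ot n}$ is finite dimensional, and the graded dual of the cofree Zinbiel coalgebra $\bar{T}^{c}\Pi\g$ is the free Zinbiel algebra $\bar{T}(\Pi\g)^{*}$ equipped with the product $*$ dual to the coproduct $\Delta$. A standard transpose argument shows that a linear map $D$ is a coderivation of $(\bar{T}^{c}\Pi\g,\Delta)$ if and only if its dual $D^{*}$ is a derivation of $(\bar{T}(\Pi\g)^{*},*)$. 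Writing $\mathrm{Ann}(U)$ for the annihilator of a subspace $U$ under the canonical pairing $\langle\,,\,\rangle$, the transpose relation $\langle Dw,\phi\rangle=\pm\langle w,D^{*}\phi\rangle$ shows that whenever $U$ is $D^{*}$-stable, its annihilator $\mathrm{Ann}(U)$ is $D$-stable. Hence it suffices to exhibit the free Lie algebra as $\mathrm{Ann}(U)$ for a suitable $D^{*}$-stable subspace $U$.

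First I would characterise the free Lie algebra by orthogonality. By the super analogue of Ree's criterion, an element $w\in\bar{T}\Pi\g$ lies in the free Lie algebra if and only if it is orthogonal to every proper shuffle, i.e. $\langle w,sh(a,b)\rangle=0$ for all $a,b\in\bar{T}^{\ge 1}(\Pi\g)^{*}$. In other words the free Lie algebra over $\Pi\g$ equals $\mathrm{Ann}(\Im\,sh)$, the annihilator of the span of all shuffle products on the dual. The preceding Lemma (in its super form) identifies these shuffle products with the symmetrised Zinbiel products, $sh(a,b)=a*b+b*a$, so $\Im\,sh$ is precisely the image of the symmetrisation of $*$.

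The key step is then immediate. Let $\partial:=D^{*}$ be the Zinbiel derivation dual to a coderivation $D$. Being a derivation for $*$, the map $\partial$ is automatically a derivation for the opposite product $*^{op}$, hence for $sh=*+*^{op}$; concretely $\partial\big(sh(a,b)\big)=sh(\partial a,b)\pm sh(a,\partial b)$, which again lies in $\Im\,sh$. Thus $\Im\,sh$ is a $\partial$-stable subspace, and by the duality of the first paragraph its annihilator, namely the free Lie algebra over $\Pi\g$, is stable under $D$. This proves the Lemma.

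The main obstacle is bookkeeping rather than structure: one must check that the Koszul signs produced by working over the odd space $\Pi\g$ are consistent throughout, so that the super-shuffle of the preceding Lemma, the coderivation–derivation duality, and the super form of Ree's criterion all refer to mutually compatible sign conventions. I would first fix the pairing on $\bar{T}\Pi\g\times\bar{T}(\Pi\g)^{*}$ with its Koszul signs, verify once that $\Delta$ is dual to $*$ under it, and then let the three ingredients fall into place. As a sanity check one can confirm the statement directly in low weight; for instance, applying a binary coderivation $D_{B}$ to the Lie element $[x_{1},[x_{2},x_{3}]]$ returns $[B(x_{1},x_{2}),x_{3}]+[B(x_{1},x_{3}),x_{2}]-[B(x_{2},x_{3})+B(x_{3},x_{2}),x_{1}]$, which is manifestly again a Lie element.
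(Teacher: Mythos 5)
Your argument is correct, but it takes a genuinely different route from the paper's. The paper proves the lemma by a double induction, on the arity of $B$ and on the length of the Lie word: it writes $[x_{1},\ldots,x_{n}]=x_{1}\ot[x_{2},\ldots,x_{n}]-(-1)^{n-1}[x_{2},\ldots,x_{n}]\ot x_{1}$, applies the coderivation formula to each piece, shows the cross terms vanish, and arrives at the explicit recursion (\ref{defBonLie}), namely $B[x_{1},\ldots,x_{n}]=B_{x_{1}}[x_{2},\ldots,x_{n}]+(-1)^{|B|}[x_{1},B[x_{2},\ldots,x_{n}]]$. You instead dualize: a coderivation of $\bar{T}^{c}\Pi\g$ corresponds to a derivation of the free Zinbiel algebra on the dual; a Zinbiel derivation automatically preserves the symmetrized products, which by the shuffle Lemma are exactly the proper shuffles; and by the super form of Ree's criterion the free Lie algebra is precisely the annihilator of the proper shuffles, so stability follows by transposition. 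This is conceptually cleaner and explains \emph{why} the lemma holds (Lie elements are primitives, and coderivations respect the relevant structure). What it costs: (i) it imports the super Ree/Friedrichs criterion, a nontrivial external theorem whose Koszul-sign conventions must be matched to the paper's, whereas the paper's induction is self-contained; (ii) the identification of the graded dual of $\bar{T}^{c}\Pi\g$ with the free Zinbiel algebra $\bar{T}(\Pi\g)^{*}$, and of $D^{*}$ as an endomorphism of it, genuinely requires $\g$ to be finite dimensional, an assumption the inductive proof never uses; (iii) most importantly for this paper, your proof establishes stability as a qualitative fact but does not produce the recursion (\ref{defBonLie}), which is not a mere by-product of the paper's proof but the working tool invoked later, both in the $n=4$ example and in the proof that $\{A_{1},A_{2}\}$ is anticyclic, where (\ref{defaccochain}) and (\ref{defBonLie}) are used together. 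So if your route were adopted, that formula would still have to be derived separately.
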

\begin{proof}
Let $B$ be an $i$-ary bar-cochain in $C_{Leib}^{i}(\g)$.
The case of $i=2$ was proved in \cite{Uchi}.
Hence we suppose that when the arity of $B$ is $i-1$,
the lemma holds.
We should compute $B[x_{1},...,x_{n}]$,
where $[x_{1},...,x_{n}]$
is the right-normalized Lie bracket
$[x_{1},...,x_{n}]:=[x_{1},[x_{2},...,x_{n}]]$.
When $n=i$, the lemma obviously holds.
So assume that $B[x_{1},...,x_{n-1}]$
is an element of the free Lie algebra,
where $n-1>i$.
We have
$$
[x_{1},...,x_{n}]=
x_{1}\ot [x_{2},...,x_{n}]
-(-1)^{n-1}[x_{2},...,x_{n}]\ot x_{1}.
$$
Applying $B$ to the first term,
$$
B(x_{1}\ot [x_{2},...,x_{n}])=
B_{x_{1}}[x_{2},...,x_{n}]
+(-1)^{|B|}x_{1}\ot B[x_{2},...,x_{n}],
$$
where $B_{x_{1}}:=B(x_{1},\cdot,\cdot,...,\cdot)$.
Since the arity of $B_{x_{1}}$ is $i-1$
and the length of $[x_{2},...,x_{n}]$ is $n-1$,
by assumption of induction
$B_{x_{1}}[x_{2},...,x_{n}]$
and $B[x_{2},...,x_{n}]$
are elements of the free Lie algebra.
Applying $B$ to the second term, we have
$$
B([x_{2},...,x_{n}]\ot x_{1})=
B[x_{2},...,x_{n}]\ot x_{1}+X,
$$
where $X$ is the term which has $B(,...,x_{1})$.
It is easy to prove that $X=0$.
Therefore, we obtain
\begin{equation}\label{defBonLie}
B[x_{1},...,x_{n}]=
B_{x_{1}}[x_{2},...,x_{n}]+(-1)^{|B|}
[x_{1},B[x_{2},...,x_{n}]],
\end{equation}
which implies the desired result.
\end{proof}
From (\ref{defBonLie}), we can know how
$B[x_{1},...,x_{n}]$ is computed.
For example, if $n=4$ and the arity of $B$ is $3$
($|B|=even$),
\begin{multline*}
B[x_{1},x_{2},x_{3},x_{4}]=\\
[B(x_{1},x_{2},x_{3}),x_{4}]+
[x_{3},B(x_{1},x_{2},x_{4})]-
[x_{2},B(x_{1},[x_{3},x_{4}])]+
[x_{1},B[x_{2},x_{3},x_{4}]],
\end{multline*}
where $|x_{i}|:=odd$.\\
\indent
Let $(\s,\omega)$ be a symplectic plane,
where $\omega$ is a symplectic structure on $\s$.
Let $A$ be an anticyclic $n-1$-cochain over $\s$,
which is an $n$-linear function on $\bar{T}\Pi\g$.
The ac cochain is identified with a bar cochain via
the symplectic structure,
\begin{equation}\label{defham}
A=(-1)^{|B|}\omega(B,-).
\end{equation}
Let $A_{1}$ be an ac $i$-cochain,
let $A_{2}$ an ac $j$-cochain
and let $B_{1},B_{2}$ the bar-cochains corresponding to
$A_{1},A_{2}$ respectively.
The parities of $B_{1}$ and $B_{2}$ are $i+1$
and $j+1$, respectively.
Define $\{A_{1},A_{2}\}$ by
\begin{equation}\label{defAA}
\{A_{1},A_{2}\}:=(-1)^{i+j}\omega([B_{1},B_{2}],-),
\end{equation}
which is an $i+j$-ary linear function.
From (\ref{defaccochain}), we have
\begin{lemma}\label{lemma999}
If $A$ is an anticyclic cochain, then
$$
A(x_{1},...,x_{n})
=-(-1)^{n-k+1}A(x_{1},...,[x_{k},...,x_{n}],x_{k-1}).
$$
where $|x_{i}|:=odd$.
\end{lemma}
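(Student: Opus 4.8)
The plan is to reduce the claimed symmetry of $A$ to a single identity in the free Lie algebra over $\Pi\g$, which in turn follows from the graded Jacobi identity. Throughout I take $2\le k\le n$ (for $k=1$ the right-hand side has no $x_{k-1}$), and I abbreviate $w:=[x_{k},\ldots,x_{n}]$. First I would apply the defining relation (\ref{defaccochain}) to each side. Although the right-hand argument $x_{1}\ot\cdots\ot x_{k-2}\ot w\ot x_{k-1}$ contains a bracket, it is still an element of $\bar{T}\Pi\g$ of total length $n$, so (\ref{defaccochain}) applies to it verbatim and rewrites it as $\tfrac{1}{n}A$ of its right-normalized bracket. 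Writing $\rho$ for right-normalization, both sides thus acquire the common factor $\tfrac1n A$, and it suffices to prove the identity of Lie words
\[
[x_{1},\ldots,x_{n}]=-(-1)^{\,n-k+1}\,\rho\big(x_{1}\ot\cdots\ot x_{k-2}\ot w\ot x_{k-1}\big).
\]

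Next I would strip the common prefix $x_{1},\ldots,x_{k-2}$. The elementary recursion $\rho(a\ot u)=[a,\rho(u)]$ for a single letter $a$ (right-normalization brackets the leading letter with the right-normalization of the tail) lets me write the left-hand word as $[x_{1},[x_{2},\ldots,[x_{k-2},[x_{k-1},w]]]]$, since $\rho(x_{k-1}\ot x_{k}\ot\cdots\ot x_{n})=[x_{k-1},w]$. On the right-hand side the same recursion strips only the genuine single letters $x_{1},\ldots,x_{k-2}$ and halts at the composite entry $w$, giving $[x_{1},[x_{2},\ldots,[x_{k-2},\rho(w\ot x_{k-1})]]]$. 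Equating the innermost slots reduces the whole lemma to the core identity
\[
\rho\big(w\ot x_{k-1}\big)=-(-1)^{\,n-k+1}\,[x_{k-1},w],\qquad w=[x_{k},\ldots,x_{n}].
\]

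It remains to prove this core identity, which carries the real content. I would deduce it from the operator identity asserting that, for a degree-$m$ element $w$ of the free Lie algebra and any Lie element $\ell$,
\[
\sum_{b_{1}\ot\cdots\ot b_{m}}\mathrm{ad}_{b_{1}}\cdots\mathrm{ad}_{b_{m}}(\ell)=[w,\ell],
\]
the sum running over the monomials $b_{1}\ot\cdots\ot b_{m}$ of $w$. Indeed $\rho(w\ot x_{k-1})$ is exactly the left-hand side for $\ell=x_{k-1}$, and graded antisymmetry $[w,x_{k-1}]=-(-1)^{m}[x_{k-1},w]$ then delivers the core identity with $m=n-k+1$. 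The operator identity I would prove by induction on $m$: using that the degree-$m$ part of the free Lie algebra is spanned by words $w=[a,C]$ with $a$ a generator and $C$ of degree $m-1$, expanding $w=a\ot C-(-1)^{m-1}C\ot a$, and applying the inductive hypothesis to the two resulting sums (the second with $\ell$ replaced by $[a,\ell]$) reduces the claim to
\[
[[a,C],\ell]=[a,[C,\ell]]-(-1)^{m-1}[C,[a,\ell]],
\]
which is precisely the graded Jacobi identity because $|a|\,|C|\equiv m-1$. The main obstacle is bookkeeping the signs in this induction; once the operator identity is established the rest of the argument is formal, and the standing convention that every $|x_{i}|$ is odd enters only through these signs (notably via $[a,b]=[b,a]$ on generators).
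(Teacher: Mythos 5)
Your proof is correct, and it takes the only route the paper itself indicates: the paper states Lemma \ref{lemma999} \emph{without proof}, asserting it follows ``From (\ref{defaccochain})'' and giving only the $n=3$ example, and your argument is precisely the fleshed-out version of that derivation. Concretely, your three steps --- apply (\ref{defaccochain}) to both sides (valid by multilinearity even though one argument is a composite Lie word), strip the common prefix via $\rho(a\ot u)=[a,\rho(u)]$, and close with the operator identity $\rho(w\ot\ell)=[w,\ell]$ (i.e.\ $\mathrm{ad}_{w}=\sum\mathrm{ad}_{b_{1}}\cdots\mathrm{ad}_{b_{m}}$ for Lie elements $w$, proved by induction from the graded Jacobi identity) plus graded antisymmetry $[w,x_{k-1}]=-(-1)^{n-k+1}[x_{k-1},w]$ --- are sound, reproduce the correct sign, and are consistent with the paper's $n=3$ example.
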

For example, when $n=3$,
$$
A(x_{1},x_{2},x_{3})=-A([x_{2},x_{3}],x_{1})
=-A(x_{2},x_{3},x_{1})-A(x_{3},x_{2},x_{1}).
$$
\begin{proposition}
The cochain defined in (\ref{defAA})
is again anticyclic and the bracket
$\{A_{1},A_{2}\}$ is an even Lie bracket
on the space of anticyclic cochains.
\end{proposition}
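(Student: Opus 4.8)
The plan is to carry the problem across the correspondence (\ref{defham}) to the Lie algebra of coderivations (bar-cochains) and to exploit that this Lie algebra preserves the free Lie algebra. Set $B:=[B_{1},B_{2}]$, the commutator of the bar-cochains attached to $A_{1},A_{2}$. Since the coderivations form a Lie algebra under the commutator, $B$ is again a coderivation, of parity $(i+1)+(j+1)\equiv i+j$ and of arity $i+j-1$; thus $(-1)^{i+j}\omega(B,-)$ is an $(i+j)$-ary function, as asserted in (\ref{defAA}). By the preceding lemma on the stability of the free Lie algebra, $B$ maps the free Lie algebra into itself and obeys the recursion (\ref{defBonLie}).

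First I would show that $\{A_{1},A_{2}\}$ is anticyclic, i.e. that it satisfies (\ref{defaccochain}). The conceptual route is to recognise anticyclicity on the bar side as a symplectic condition: the cochain $(-1)^{|B|}\omega(B,-)$ is anticyclic exactly when $B$ is skew for $\omega$ in the graded sense $\omega(B\x,\y)=-(-1)^{|B||\x|}\omega(\x,B\y)$, the precise sign being dictated by the antisymmetry of $\omega$ and by Lemma \ref{lemma999}. Granting this, one checks in two lines that $\omega$-skewness is stable under the commutator: from $\omega(B_{1}B_{2}\x,\y)=\pm\omega(\x,B_{2}B_{1}\y)$ the two cross terms reorganise into $\pm\omega(\x,[B_{1},B_{2}]\y)$, so $B=[B_{1},B_{2}]$ is again $\omega$-skew and $\{A_{1},A_{2}\}$ is anticyclic. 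Should fixing the skewness sign prove delicate, the fallback is a direct verification of (\ref{defaccochain}) by induction on the arity: substitute the right-normalized bracket $[x_{1},\dots,x_{n}]$ into $B$, expand by (\ref{defBonLie}) into the lower-arity terms $B_{x_{1}}[x_{2},\dots,x_{n}]$ and $[x_{1},B[x_{2},\dots,x_{n}]]$, and close the induction using the anticyclicity of $A_{1},A_{2}$ and the rotation identity of Lemma \ref{lemma999}; the normalising factor $\tfrac{1}{n}$ reproduces itself by the same counting as in the $n=3$ instance displayed after Lemma \ref{lemma999}.

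Next I would verify the Lie-algebra axioms. Bilinearity is immediate from (\ref{defAA}). Graded antisymmetry of the commutator, $[B_{1},B_{2}]=-(-1)^{(i+1)(j+1)}[B_{2},B_{1}]$, inserted into (\ref{defAA}) and compared with $\{A_{2},A_{1}\}=(-1)^{i+j}\omega([B_{2},B_{1}],-)$, gives $\{A_{1},A_{2}\}=-(-1)^{(i+1)(j+1)}\{A_{2},A_{1}\}$, which is graded antisymmetry for the parities $|A_{1}|=i+1$ and $|A_{2}|=j+1$ inherited from $B_{1},B_{2}$. Moreover $\{A_{1},A_{2}\}$ is $(i+j)$-ary, so its parity $i+j$ equals $|A_{1}|+|A_{2}|$ modulo $2$: the bracket carries no parity shift, that is, it is \emph{even}. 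This evenness reflects the anti-cyclicity of the Leibniz operad, which is what makes the invariant form $\omega$ antisymmetric; the sign $(-1)^{i+j}$ in (\ref{defAA}) is calibrated accordingly so that $\{.,.\}$ comes out graded antisymmetric and parity-preserving. The Jacobi identity descends in the same manner from the graded Jacobi identity for the commutator of coderivations: expanding $\{A_{1},\{A_{2},A_{3}\}\}$ through (\ref{defAA}) produces $\pm\omega([B_{1},[B_{2},B_{3}]],-)$, and the graded Jacobi relation for $B_{1},B_{2},B_{3}$, together with the uniform sign in (\ref{defAA}), yields the ordinary Jacobi identity for $\{.,.\}$.

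I expect the anticyclicity step to be the real obstacle: both routes stand or fall on getting the signs exactly right --- the value of the skew-symmetry sign and its behaviour under the commutator, or, in the inductive approach, the meshing of the signs in (\ref{defBonLie}) with those produced by Lemma \ref{lemma999} and the $(j,i-1)$-unshuffle bookkeeping. Once anticyclicity is secured, the Lie-algebra axioms and the evenness follow by transporting the corresponding properties of the commutator of coderivations through the sign-twisted identification (\ref{defham})--(\ref{defAA}), and so reduce to parity arithmetic.
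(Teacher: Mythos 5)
The core of this proposition is the closure statement --- that $\{A_{1},A_{2}\}$ defined by (\ref{defAA}) is again anticyclic --- and that is exactly the step your proposal does not actually prove. Your primary ``conceptual'' route rests on the claimed equivalence between anticyclicity of $A=(-1)^{|B|}\omega(B,-)$ and a graded skewness $\omega(B\x,\y)=-(-1)^{|B||\x|}\omega(\x,B\y)$. As written this does not typecheck: $\omega$ is defined only on $\Pi\s$ by (\ref{defomega}), while $B$ is an $i$-ary operation (equivalently a coderivation of the cofree coalgebra); if $\x$ is a word of length $i$ then $B\x\in\Pi\s$ and $\omega(B\x,\y)$ makes sense for $\y\in\Pi\s$, but then $B\y$ is meaningless. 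Making ``$B$ preserves $\omega$'' precise would require extending $\omega$ to the tensor coalgebra or defining a Lie derivative of $\omega$ along a coderivation, machinery neither you nor the paper sets up. Moreover, the adjointness that anticyclic cochains genuinely satisfy is not a transposition: by Lemma \ref{lemma999} it reads $A_{2}(\ldots,B_{1}(\ldots),x_{k},\ldots,x_{n})=-A_{2}(\ldots,[x_{k},\ldots,x_{n}],B_{1}(\ldots))$, i.e.\ the displaced arguments re-enter as the right-normalized Lie word $[x_{k},\ldots,x_{n}]$. With this twisted adjointness the standard two-line argument that skew operators are closed under commutator does not reorganize the cross terms into $\omega(-,[B_{1},B_{2}]-)$; the extra work begins precisely where you stop.

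Your fallback route names the correct ingredients ((\ref{defBonLie}), Lemma \ref{lemma999}, anticyclicity of $A_{1},A_{2}$) but leaves the decisive mechanism unidentified, and the paper's argument is not an induction on arity. The paper computes $(-1)^{i+j}\omega\big([B_{1},B_{2}](x_{1},\ldots,x_{n-1}),x_{n}\big)$ directly: the terms with $B_{1}$ outermost become $A_{1}(\ldots,B_{2}(\ldots),\ldots,x_{n})$; the terms with $B_{2}$ outermost are flipped, using Lemma \ref{lemma999} and the symmetry of $\omega$ on the parity-shifted space, into $A_{1}\big(\ldots,B_{2}(\ldots,[x_{k},\ldots,x_{n}])\big)$; and the sum is then recognized, via (\ref{defBonLie}) and (\ref{defaccochain}), as $(-1)^{j+1}\tfrac{1}{i}A_{1}B_{2}[x_{1},\ldots,x_{n}]$. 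This factorization is the whole point: by the stability lemma $B_{2}$ maps the free Lie algebra to itself, so $\{A_{1},A_{2}\}(x_{1},\ldots,x_{n})$ is a linear functional of the Lie word $[x_{1},\ldots,x_{n}]$, and any such functional is automatically anticyclic, since re-bracketing a degree-$n$ Lie element reproduces it $n$ times --- which is exactly what (\ref{defaccochain}) demands. Without this factorization your induction has no closing step. By contrast, the second half of your proposal --- transporting antisymmetry, Jacobi and evenness through (\ref{defham}) from the commutator Lie algebra of coderivations, with the parity count $(i+1)+(j+1)\equiv i+j$ --- is sound, and is essentially all the paper does (implicitly) for that half of the statement.
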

\begin{proof}
For the sake of simplicity, we suppose that
the parity of variable is even. We have
$$
[B_{1},B_{2}]=\sum B_{1}(,...,B_{2}(,...,),...,)
-(-1)^{(i+1)(j+1)}B_{2}(,...,B_{1}(,...,),...,).
$$
Hence
\begin{multline*}
(-1)^{i+j}
\omega\big([B_{1},B_{2}](,...,),x_{n}\big)=\\
\sum(-1)^{j+1}A_{1}(,...,B_{2}(,...,),...,x_{n})
-(-1)^{i+j+(i+1)(j+1)}\omega
\big(B_{2}(,...,B_{1}(,...,),...,),x_{n}\big).
\end{multline*}
where we put $n:=i+j$.
By Lemma \ref{lemma999},
\begin{eqnarray*}
\omega\big(B_{2}(,...,B_{1}(,...,),...,),x_{n}\big)
&=&(-1)^{j+1}A_{2}(,...,B_{1}(,...,),x_{k},...,x_{n})\\
&=&-(-1)^{j+1}A_{2}(,...,[x_{k},...,x_{n}],B_{1}(,...,))\\
&=&-\omega\big(B_{2}(,...,[x_{k},...,x_{n}]),
B_{1}(,...,)\big)
\end{eqnarray*}
since $\omega$ is symmetric on $\Pi\g$
\begin{eqnarray*}
&=&-(-1)^{(i+1)(j+1)}\omega\big(B_{1}(,...,),
B_{2}(,...,[x_{k},...,x_{n}])\big)\\
&=&-(-1)^{(i+1)(j+1)+(i+1)}
A_{1}(,...,B_{2}(,...,[x_{k},...,x_{n}])).
\end{eqnarray*}
Hence we have
\begin{multline*}
(-1)^{i+j}
\omega\big([B_{1},B_{2}](,...,),x_{n}\big)=\\
\sum(-1)^{j+1}A_{1}(,...,B_{2}(,...,),...,x_{n})+
(-1)^{j+1}A_{1}\big(,...,B_{2}(,...,[x_{k},...,x_{n}])\big).
\end{multline*}
From (\ref{defaccochain}) and (\ref{defBonLie}),
one can see through that
\begin{equation}
(-1)^{i+j}\omega\big([B_{1},B_{2}](x_{1},...,x_{n-1}),x_{n}\big)=
(-1)^{j+1}\frac{1}{i}A_{1}B_{2}[x_{1},...,x_{n}].
\end{equation}
This implies that $\{A_{1},A_{2}\}$ is
an anticyclic cochain.
\end{proof}
We notice that $\{A_{1},A_{2}\}\sim\omega(B_{1},B_{2})$.
By a direct computation one can show that
\begin{proposition}\label{poissonbracket1}
\begin{equation}\label{keyid1}
\{A_{1},A_{2}\}(x_{1},...,x_{n})
=(-1)^{i+1}
\omega(B_{1},B_{2})(1^{\ot i}\ot T)[x_{1},...,x_{n}],
\end{equation}
where $T$ is the transposition of tensor,
$T(x_{1},...,x_{n}):=(\pm)(x_{n},...,x_{1})$.
\end{proposition}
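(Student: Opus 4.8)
The plan is to derive the closed formula (\ref{keyid1}) from the identity established at the end of the preceding proposition, which, together with the definition (\ref{defAA}), reads
$$
\{A_1,A_2\}(x_1,\dots,x_n)=(-1)^{j+1}\tfrac{1}{i}\,A_1B_2[x_1,\dots,x_n],\qquad n:=i+j.
$$
First I would substitute the correspondence (\ref{defham}), $A_1=(-1)^{i+1}\omega(B_1,-)$, so that the right-hand side becomes $(-1)^{i+j}\tfrac{1}{i}\,\omega(B_1,-)B_2[x_1,\dots,x_n]$. Since $B_2$ is $j$-ary and $[x_1,\dots,x_n]$ is the right-normalized bracket of length $n$, the stability lemma (the free Lie algebra is stable under coderivations) guarantees that $B_2[x_1,\dots,x_n]$ is again a Lie element, now of length $i+1$, so the $(i+1)$-ary functional $\omega(B_1,-)$ may be applied term by term. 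This reduces the proposition to the purely tensorial identity
$$
\omega(B_1,-)\,B_2[x_1,\dots,x_n]=i\,(-1)^{j+1}\,\omega(B_1,B_2)(1^{\ot i}\ot T)[x_1,\dots,x_n],
$$
after which collecting the sign $(-1)^{i+1}$ from (\ref{defham}) produces exactly the coefficient in (\ref{keyid1}).

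Next I would expand $B_2[x_1,\dots,x_n]$ through the recursion (\ref{defBonLie}), writing it as a sum of right-normalized monomials of length $i+1$ in each of which a single block of $j$ consecutive arguments has been absorbed by $B_2$. Applying $\omega(B_1,-)$ then pairs $B_1$ evaluated on the first $i$ entries against the last entry through $\omega$. The crux is to transport, in every monomial, the absorbed $B_2$-block into the final $j$ slots: here I would invoke Lemma \ref{lemma999} repeatedly to move the $B_2$-block to the tail, combined with the fact that $\omega$ is symmetric on $\Pi\g$ to interchange the two arguments of $\omega$ whenever needed (exactly as in the preceding proof). This transport reverses the order of the last $j$ factors, which is precisely what the transposition operator $T$ records, while the first $i$ factors, on which $B_1$ acts, are left fixed, yielding the factor $1^{\ot i}$. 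Each term is thereby converted into $\pm\,\omega\big(B_1(x_1,\dots,x_i),B_2(x_n,\dots,x_{i+1})\big)$ evaluated on the corresponding summand of $(1^{\ot i}\ot T)[x_1,\dots,x_n]$.

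Finally I would carry out the sign-and-multiplicity bookkeeping. The anticyclic normalization (\ref{defaccochain}) forces the $i$ admissible positions into which the $B_1$-block can be rotated to contribute equally, so that summing them supplies the multiplicity $i$ that cancels the prefactor $1/i$; the residual Koszul sign from reversing the $j$ odd factors is what assembles into the announced $(-1)^{j+1}$, and hence, after (\ref{defham}), into $(-1)^{i+1}$. I expect the main obstacle to be exactly this last matching: verifying that the permutation accumulated while moving the $B_2$-block to the tail is literally the reversal encoded by $T$, and that the $i$ rotated terms genuinely coincide so the factor $1/i$ disappears, demands a careful, sign-sensitive tracking of the unshuffles in (\ref{defcoderi}) and (\ref{defBonLie}). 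Once this combinatorial identification is pinned down, the remaining steps are routine.
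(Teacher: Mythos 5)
Your proposal is correct and takes essentially the same approach as the paper: the paper's own proof likewise pushes forward the computation of the preceding proposition --- the identity $\{A_1,A_2\}=(-1)^{j+1}\tfrac{1}{i}A_1B_2[x_1,\dots,x_n]$ combined with (\ref{defham}), (\ref{defBonLie}) and Lemma \ref{lemma999} --- and then concludes by matching the accumulated signs against those of $T[x_1,\dots,x_n]$. Note that the paper's argument is itself only a sketch (it ends with ``one can check that the sign of the right-hand side is equal to the one of $T[x_1,\dots,x_n]$''), so the sign-sensitive combinatorial verification you defer is precisely the step the paper also leaves to the reader.
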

%To prove (\ref{keyid1}) we use
%\begin{eqnarray*}
%\label{}T[x_{1},...,x_{n}]&=&(-1)^{n+1}[x_{1},...,x_{n}],\\
%\label{}[x_{1},...,x_{n}]&=&\sum(-1)^{n+k}
%x_{\sigma(1)}\ot
%\cdots\ot x_{\sigma(k)=n}\ot\cdots\ot x_{\sigma(n)},
%\end{eqnarray*}
%where we put $|x_{i}|:=0$ for each $i$.
For example, when $A_{1}$ is an ac 1-cochain
and $A_{2}$ is an ac 2-cochain,
\begin{eqnarray*}
(1\ot T)[x_{1},x_{2},x_{3}]
&=&x_{1}\ot T[x_{2},x_{3}]-x_{2}\ot T(x_{3}\ot x_{1})
-x_{3}\ot T(x_{2}\ot x_{1})\\
&=&-x_{1}\ot[x_{2},x_{3}]+x_{2}\ot(x_{1}\ot x_{3})
+x_{3}\ot(x_{1}\ot x_{2}),
\end{eqnarray*}
where we put $|x_{i}|=odd$ for each $i$.
Hence
\begin{multline*}
\{A_{1},A_{2}\}(x_{1},x_{2},x_{3})=\\
\omega\big(B_{1}(x_{1}),B_{2}[x_{2},x_{3}]\big)
-\omega\big(B_{1}(x_{2}),B_{2}(x_{1},x_{3})\big)
-\omega\big(B_{1}(x_{3}),B_{2}(x_{1},x_{2})\big),
\end{multline*}

%%%%%%%%%%%%%%%%%%%%%%%%%%%%%%%%%%%%%
\subsection{Universal invariant bilinear form}
%%%%%%%%%%%%%%%%%%%%%%%%%%%%%%%%%%%%%
Suppose that
$\g$ is a finite dimensional vector space.
Let $(p_{i})$ be a linear base of $\Pi\g$
and let $(q^{i})$ the dual base of $\Pi\g^{*}$.
Then the anticyclic cochain defined in (\ref{defaccochain})
is expressed as follows.
\begin{equation}\label{tensorexp}
A=\frac{1}{n}A_{i_{1},...,i_{n}}
[q^{i_{1}},...,q^{i_{n}}]_{*},
\end{equation}
where $[,...,]_{*}$ is the dual of
the normalized bracket $[,...,]$,
which is defined as follows.
\begin{equation}\label{defdualcommutator}
[x^{1},...,x^{n}]_{*}:=
x^{1}\ot[x^{2},...,x^{n}]_{*}-(-1)^{n-1}
x^{n}\ot[x^{1},...,x^{n-1}]_{*},
\end{equation}
where $|x_{i}|:=odd$ for each $i$.\\
\indent
In the following we suppose that
the parity of variables are even for
the sake of simplicity.
For any $x_{1},x_{2},x_{3}\in\g$,
the dual commutator satisfies
$[x_{1},x_{2}]_{*}=-[x_{1},x_{2}]_{*}$ and
\begin{eqnarray*}
[x_{1},x_{2},x_{3}]_{*}&=&[x_{3},x_{2},x_{1}]_{*},\\
\oint[x_{1},x_{2},x_{3}]_{*}&=&0,
\end{eqnarray*}
which are the same relations
as (\ref{zinv1}) and (\ref{zinv2}),
respectively.
Denote $\x:=x_{1}\ot\cdots\ot x_{a}$
and $\y:=y_{1}\ot\cdots\ot y_{b}$.
We put
\begin{equation}\label{univpairing}
\<\x,\y\>:=(-1)^{b+1}[\x,T\y]_{*},
\end{equation}
where $T\y$ is the transposition $\y$.
\begin{theorem}
This pairing is the universal invariant bilinear form
on the free Zinbiel algebra $\bar{T}\g$.
Namely, if $Z$ is a Zinbiel algebra equipped with
an invariant pairing $\<\cdot,\cdot\>^{\p}$
satisfying (\ref{zinv1})-(\ref{zinv2})
and if $f:\g\to Z$ is a linear map, then
the universal lift of $f$, $\hat{f}:\bar{T}\g\to Z$,
preserves the bilinear form.
\end{theorem}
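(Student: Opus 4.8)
The plan is to establish the two features separately: first that the tensor formula (\ref{univpairing}) really does define an invariant bilinear form on $\bar{T}\g$, and then the lifting (universality) property. For the second part I would exploit that $\hat{f}$ is a morphism of Zinbiel algebras, so that it suffices to compare both sides on elements built from the generators $\g\subset\bar{T}\g$ by the Zinbiel product.

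\emph{Step 1 (invariance).} Antisymmetry of $\<\cdot,\cdot\>$ and the relations (\ref{zinv1})--(\ref{zinv2}) have already been recorded for the dual commutator on single letters, in the form $[x_1,x_2,x_3]_*=[x_3,x_2,x_1]_*$ and $\oint[x_1,x_2,x_3]_*=0$. I would promote these to the pairing $\<\x,\y\>$ of arbitrary tensors by induction on the total length $a+b$, unfolding the recursion (\ref{defdualcommutator}) one letter at a time. The Zinbiel product of tensors is concatenation, $(\cdots(x_1*x_2)*\cdots)*x_n=x_1\ot\cdots\ot x_n$, and the symmetrized product is the shuffle, $sh(\x,\y)=\x*\y+\y*\x$; substituting these into (\ref{zinv1})--(\ref{zinv2}) turns the desired identities into identities among dual commutators that follow from (\ref{defdualcommutator}) together with the single-letter case.

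\emph{Step 2 (universality).} Let $(Z,\<\cdot,\cdot\>^{\p})$ be an invariant Zinbiel algebra and $f:\g\to Z$ linear. Since $\x=(\cdots(x_1*x_2)*\cdots)*x_a$ and $\hat{f}$ is a Zinbiel morphism, we have $\hat{f}(\x)=(\cdots(f(x_1)*f(x_2))*\cdots)*f(x_a)$, and likewise for $\y$. I would then compute $\<\hat{f}(\x),\hat{f}(\y)\>^{\p}$ by repeatedly applying (\ref{zinv1})--(\ref{zinv2}) inside $Z$ to strip off the outermost product, shortening the arguments until only generator-level pairings survive. The claim is that this reduction is governed by exactly the same unshuffle combinatorics as the recursion (\ref{defdualcommutator}) that assembles the universal form $\<\x,\y\>$ from its single-letter data; matching the two reductions term by term, and using that $\hat{f}$ restricts to $f$ on $\g$, identifies $\<\hat{f}(\x),\hat{f}(\y)\>^{\p}$ with the value prescribed by (\ref{univpairing}), under the evident identification on generators.

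The main obstacle is a bookkeeping one. The Zinbiel product is left-normalized, so invariance most naturally peels off the \emph{last} tensor letter, whereas the recursion (\ref{defdualcommutator}) for the dual commutator extracts the \emph{first} letter; the transposition $T$ and the sign $(-1)^{b+1}$ inserted in (\ref{univpairing}) are precisely what reconcile these opposite directions. The delicate point will be verifying that the Koszul signs produced by the odd-parity convention, the unshuffle signs in (\ref{defdualcommutator}), and the signs generated by each use of (\ref{zinv1})--(\ref{zinv2}) cancel as required. Once the single-letter base case and this sign accounting are in place, the double induction on $(a,b)$ closes and both Steps follow.
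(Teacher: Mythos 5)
Your Step~1 tracks the paper: antisymmetry comes from the triangularity of the dual commutator, $[x_{1},...,x_{n}]_{*}=(-1)^{n+1}[x_{n},...,x_{1}]_{*}$, and (\ref{zinv1}) for the pairing (\ref{univpairing}) is proved by exactly the kind of induction you describe, peeling off first letters via $sh(\x,\y)=x_{1}\ot sh(\x_{2},\y)+y_{1}\ot sh(\x,\y_{2})$ and $\x*\y=sh(\x,\y^{b-1})\ot y_{b}$. That part is fine, modulo the sign bookkeeping you defer.

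Step~2, however, has a genuine gap. You propose to apply (\ref{zinv1})--(\ref{zinv2}) in $Z$ so as to strip off the outermost product, ``shortening the arguments until only generator-level pairings survive,'' and to match this with (\ref{defdualcommutator}) viewed as assembling $\<\x,\y\>$ from ``single-letter data.'' Neither half of this exists. The invariance axioms are homogeneous in the letters --- (\ref{zinv1}) merely exchanges $\x$ and $\z$ in $\<\x*\y,\z\>$, and (\ref{zinv2}) is a cyclic sum --- so every identity they generate relates pairings involving the same total collection of generators; nothing ever gets shorter, and $\<\hat{f}\x,\hat{f}\y\>^{\p}$ is not expressible through pairings $\<fx_{i},fx_{j}\>^{\p}$ of generators. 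Dually, the universal form has no single-letter data to be assembled from: already $\<x_{1},x_{2}\>=x_{1}\ot x_{2}-x_{2}\ot x_{1}$ is a length-two tensor, because (\ref{univpairing}) takes values in $\bar{T}\g$, not in the ground field. This is the point your plan never confronts: since the universal form is tensor-valued, the claim that $\hat{f}$ ``preserves'' it must first be given a meaning. The paper does this through the expansion
$$
[x_{1},...,x_{n}]_{*}=\sum_{i=1}^{n}(-1)^{n-i}\,
sh(T\x_{i+1},\x^{i-1})\ot x_{i},
$$
which groups the dual commutator by the letter occupying the last slot, and by mapping each term into $Z$ as $\<\hat{f}T\x_{i+1}*\hat{f}\x^{i-1}+\hat{f}\x^{i-1}*\hat{f}T\x_{i+1},fx_{i}\>^{\p}$. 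The substantive computation --- the one that replaces your ``matching of reductions'' --- is a chain of applications of (\ref{zinv1})--(\ref{zinv2}) in $Z$ showing that all $n$ of these terms are equal to the single value $\<\hat{f}\x,fx_{n}\>^{\p}$, whence $\frac{1}{n}(\hat{f}\ot f)[x_{1},...,x_{n}]_{*}=\<\hat{f}\x,fx_{n}\>^{\p}$. Without this expansion lemma and the resulting $\frac{1}{n}$-normalized formulation of preservation, your double induction has no well-defined endpoint to close on.
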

First of all, we should check that
$\<\x,\y\>$ is antisymmetric.
\begin{lemma}
The dual commutator is triangular, i.e.,
$$
[x_{1},...,x_{n-1},x_{n}]_{*}=
(-1)^{n+1}[x_{n},x_{n-1},...,x_{1}]_{*}.
$$
\end{lemma}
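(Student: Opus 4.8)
The plan is to prove the triangularity by induction on the length $n$, using the recursive definition (\ref{defdualcommutator}) of the dual commutator. Since we have reduced to the case $|x_i|=even$, the recursion reads $[x_1,\ldots,x_n]_* = x_1\ot[x_2,\ldots,x_n]_* - x_n\ot[x_1,\ldots,x_{n-1}]_*$, and the base cases are immediate: $[x_1]_*=x_1$ gives the sign $(-1)^{1+1}=1$, while $[x_1,x_2]_* = x_1\ot x_2 - x_2\ot x_1 = -[x_2,x_1]_*$ gives $(-1)^{2+1}=-1$, matching the antisymmetry recorded just above.

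For the inductive step I would assume the formula for all lengths $<n$ and peel off one tensor factor from each end via the recursion. The two brackets that appear, $[x_2,\ldots,x_n]_*$ and $[x_1,\ldots,x_{n-1}]_*$, both have length $n-1$, so the induction hypothesis rewrites each of them as $(-1)^{n}$ times its reversal, namely $[x_n,\ldots,x_2]_*$ and $[x_{n-1},\ldots,x_1]_*$. Factoring out the common $(-1)^n$ leaves the expression $(-1)^n\big(x_1\ot[x_n,\ldots,x_2]_* - x_n\ot[x_{n-1},\ldots,x_1]_*\big)$.

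The key observation is that the parenthesised expression is, up to an overall sign, exactly the recursion (\ref{defdualcommutator}) applied to the reversed word $(x_n,x_{n-1},\ldots,x_1)$: peeling the first letter $x_n$ and the last letter $x_1$ off $[x_n,\ldots,x_1]_*$ produces $x_n\ot[x_{n-1},\ldots,x_1]_* - x_1\ot[x_n,\ldots,x_2]_*$, which is the negative of the parenthesised expression. Substituting this identity turns the $(-1)^n$ into $(-1)^{n+1}$ and yields $[x_1,\ldots,x_n]_* = (-1)^{n+1}[x_n,\ldots,x_1]_*$, closing the induction.

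I expect the only real obstacle to be the sign bookkeeping, and in the general graded setting (where the recursion carries the Koszul factor $(-1)^{n-1}$ and the reordering of tensor factors contributes further parity signs) these must be tracked with care; the even-parity computation above is the combinatorial skeleton, and the graded case follows by inserting the appropriate signs at each reversal. The structural point that makes the argument work, and that I would emphasise, is the self-reproducing nature of the recursion under reversal: stripping the leftmost argument and then reversing agrees, up to sign, with reversing and then stripping the rightmost argument.
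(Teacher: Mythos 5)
Your proof is correct and takes essentially the same route as the paper: both argue by induction on $n$, peel off the outer factors with the recursion defining $[\,\cdot\,,\ldots,\cdot\,]_{*}$, apply the induction hypothesis to the two length-$(n-1)$ brackets to pick up a factor $(-1)^{n}$, and then recognize the resulting expression as $-1$ times the same recursion applied to the reversed word $(x_{n},\ldots,x_{1})$. The only difference is cosmetic: you spell out the final reassembly step explicitly, which the paper compresses into the last line of its displayed computation.
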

\begin{proof}
When $n=2$, the identity holds.
By the assumption of induction,
\begin{eqnarray*}
[x_{1},...,x_{n}]_{*}&=&
x_{1}\ot[x_{2},...,x_{n}]_{*}-
x_{n}\ot[x_{1},...,x_{n-1}]_{*}\\
&=&(-1)^{n}
x_{1}\ot[x_{n},...,x_{2}]_{*}
-(-1)^{n}
x_{n}\ot[x_{n-1},...,x_{1}]_{*}\\
&=&(-1)^{n+1}[x_{n},x_{n-1},...,x_{1}]_{*}.
\end{eqnarray*}
\end{proof}
Thanks to the lemma above, we obtain
\begin{multline*}
\<\x,\y\>=(-1)^{b+1}[\x,T\y]_{*}
=(-1)^{b+1}(-1)^{a+b+1}[\y,T\x]_{*}
=-(-1)^{a+1}[\y,T\x]_{*}=\\
-\<\y,\x\>.
\end{multline*}
Secondly we prove that the pairing satisfies
(\ref{zinv1}).
\begin{lemma}
$sh(\x,\y)=x_{1}\ot sh(\x_{2},\y)+y_{1}\ot sh(\x,\y_{2})$,
where $\x_{2}:=x_{2}\ot\cdots\ot x_{n}$
and $\y_{2}$ is the same.
\end{lemma}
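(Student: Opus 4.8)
The plan is to prove this recursion combinatorially, directly from the definition of the shuffle product as a (signed) sum over the $(a,b)$-shuffle permutations, that is, over all interleavings of the words $\x=x_{1}\ot\cdots\ot x_{a}$ and $\y=y_{1}\ot\cdots\ot y_{b}$ that preserve the internal order of the $x_{i}$'s and of the $y_{j}$'s. The identity to be shown is exactly the classical first-letter decomposition of the shuffle, so the whole argument is to sort the interleavings according to which letter occupies the leading position.

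First I would observe that, because every interleaving preserves the internal orders, $x_{1}$ precedes all other $x_{i}$ and $y_{1}$ precedes all other $y_{j}$; hence the leading letter of an interleaving can only be $x_{1}$ or $y_{1}$, and precisely one of these two alternatives occurs. This partitions the set of $(a,b)$-shuffles into two disjoint classes. Deleting the leading letter then sets up a bijection between the interleavings of $\x$ and $\y$ that begin with $x_{1}$ and the interleavings of $\x_{2}=x_{2}\ot\cdots\ot x_{a}$ with $\y$; dually, the interleavings beginning with $y_{1}$ correspond to the interleavings of $\x$ with $\y_{2}$. Summing over each class and reinserting the stripped leading letter yields the two summands $x_{1}\ot sh(\x_{2},\y)$ and $y_{1}\ot sh(\x,\y_{2})$, which is the claim. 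The base cases $a=1$ or $b=1$, where one of $\x_{2},\y_{2}$ is empty and $sh(\emptyset,\y)$ is read as $\y$, are immediate.

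The step I expect to require the most care is the bookkeeping of Koszul signs, since the ambient shuffle is graded (the tensors live on $\Pi\g$). The key point is that stripping $x_{1}$ (resp.\ $y_{1}$) off the front costs no sign: the deleted letter is already in the leading position, so it is transposed past nothing, and the Koszul sign of each interleaving coincides with that of its truncation. Thus the recursion holds verbatim with the signs included, and in the ungraded case it is immediate. As an independent consistency check I would note that the identity also follows from the relation $sh(\x,\y)=\x*\y+\y*\x$ proved earlier: feeding the half-shuffle recursion $\x*\y=x_{1}\ot(\x_{2}*\y)+y_{1}\ot(\x*\y_{2})$ into that formula and regrouping by leading letter reproduces exactly $x_{1}\ot sh(\x_{2},\y)+y_{1}\ot sh(\x,\y_{2})$.
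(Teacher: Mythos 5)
Your leading-letter decomposition is correct, and in fact the paper offers no proof of this lemma at all---it is stated bare in Section 3.2 and immediately put to use---so your argument supplies exactly the missing justification rather than an alternative to one. Partitioning the $(a,b)$-shuffles according to whether $x_{1}$ or $y_{1}$ leads, and deleting the leading letter to get a bijection with the shuffles of $(\x_{2},\y)$ resp.\ $(\x,\y_{2})$, is the standard and correct proof of the unsigned identity, and your cross-check against Lemma 2.1 ($sh(\x,\y)=\x*\y+\y*\x$) is consonant with how the lemma is actually deployed in the paper, where the product is the right half-shuffle $\x*\y=sh(\x,\y^{b-1})\ot y_{b}$; just be aware that the half-shuffle recursion $\x*\y=x_{1}\ot(\x_{2}*\y)+y_{1}\ot(\x*\y_{2})$ you invoke there needs the boundary conventions $\emptyset*\y=\y$ and $\x*\emptyset=0$, and is itself usually proved by the same induction, so it is a consistency check rather than an independent derivation.

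One caveat on your graded aside: the claim that stripping $y_{1}$ off the front ``costs no sign'' is wrong under the usual Koszul convention, in which the sign of an interleaving is computed relative to the concatenated word $\x\ot\y$. An interleaving beginning with $y_{1}$ transposes $y_{1}$ past all of $x_{1},\dots,x_{a}$, so its Koszul sign is $(-1)^{|y_{1}|(|x_{1}|+\cdots+|x_{a}|)}$ times that of its truncation, and the graded recursion is
\begin{equation*}
sh(\x,\y)=x_{1}\ot sh(\x_{2},\y)
+(-1)^{|y_{1}||\x|}\,y_{1}\ot sh(\x,\y_{2}),
\end{equation*}
not the verbatim unsigned formula. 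This does not damage your proof of the statement in question, because the lemma sits directly after the paper's declaration that ``the parity of variables are even for the sake of simplicity,'' so the sign-free form is the one asserted and used; but the parenthetical assurance that the identity ``holds verbatim with the signs included'' should be deleted or corrected.
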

Denote $\z:=z_{1}\ot\cdots\ot z_{c}$.
From the axiom of Zinbiel algebra,
$$
\x*\y=(\x*\y^{b-1}+\y^{b-1}*\x)\ot y_{b}
=sh(\x,\y^{b-1})\ot y_{b},
$$
where $\y^{b-1}:=y_{1}\ot\cdots\ot y_{b-1}$.
Hence
$\<\x*\y,\z\>=\<sh(\x,\y^{b-1})\ot y_{b},\z\>$.
We should prove
\begin{equation}\label{lemmaid}
(-1)^{c+1}[sh(\x,\y^{b-1}),y_{b},T\z]_{*}
=(-1)^{a+1}[sh(\z,\y^{b-1}),y_{b},T\x]_{*}.
\end{equation}
From Lemma above,
\begin{multline}\label{noproof01}
(-1)^{c+1}[sh(\x,\y^{b-1}),y_{b},T\z]_{*}=\\
(-1)^{c+1}\Big([x_{1},sh(\x_{2},\y^{b-1}),y_{b},T\z]_{*}
+[y_{1},sh(\x,\y_{2}^{b-1}),y_{b},T\z]_{*}\Big),
\end{multline}
where $\y_{2}^{b-1}:=y_{2}\ot\cdots\ot y_{b-1}$.
The first term of (\ref{noproof01}) is
\begin{multline*}
(-1)^{c+1}[x_{1},sh(\x_{2},\y^{b-1}),y_{b},T\z]_{*}=\\
(-1)^{c+1}x_{1}\ot[sh(\x_{2},\y^{b-1}),y_{b},T\z]_{*}
+(-1)^{c}z_{1}\ot[x_{1},sh(\x_{2},\y^{b-1}),y_{b},T\z_{2}]_{*}=
\end{multline*}
by the assumption of induction
\begin{equation}\label{noproof02}
=(-1)^{a}x_{1}
\ot[sh(\z,\y^{b-1}),y_{b},T\x_{2}]_{*}+(-1)^{c}z_{1}
\ot[x_{1},sh(\x_{2},\y^{b-1}),y_{b},T\z_{2}]_{*}.
\end{equation}
The second term of (\ref{noproof01}) is in the same way
\begin{multline}\label{noproof03}
(-1)^{c+1}[y_{1},sh(\x,\y_{2}^{b-1}),y_{b},T\z]_{*}=\\
(-1)^{c+1}y_{1}\ot[sh(\x,\y_{2}^{b-1}),y_{b},T\z]_{*}
+(-1)^{c}z_{1}
\ot[y_{1},sh(\x,\y_{2}^{b-1}),y_{b},T\z_{2}]_{*}=\\
(-1)^{a+1}y_{1}\ot[sh(\z,\y_{2}^{b-1}),y_{b},T\x]_{*}+
(-1)^{c}z_{1}
\ot[y_{1},sh(\x,\y_{2}^{b-1}),y_{b},T\z_{2}]_{*}.
\end{multline}
(\ref{noproof02})+(\ref{noproof03}) is
\begin{multline*}
(-1)^{a}x_{1}\ot[sh(\z,\y^{b-1}),y_{b},T\x_{2}]_{*}
+(-1)^{c}z_{1}\ot[x_{1},sh(\x_{2},\y^{b-1}),y_{b},T\z_{2}]_{*}+\\
(-1)^{a+1}y_{1}\ot[sh(\z,\y_{2}^{b-1}),y_{b},T\x]_{*}
+(-1)^{c}z_{1}
\ot[y_{1},sh(\x,\y_{2}^{b-1}),y_{b},T\z_{2}]_{*}=\\
(-1)^{a}x_{1}\ot[sh(\z,\y^{b-1}),y_{b},T\x_{2}]_{*}+
(-1)^{a+1}y_{1}\ot[sh(\z,\y_{2}^{b-1}),y_{b},T\x]_{*}+\\
(-1)^{c}z_{1}\ot[sh(\x,\y^{b-1}),y_{b},T\z_{2}]_{*}=
\end{multline*}
by the assumption of induction again
\begin{multline*}
=(-1)^{a}x_{1}\ot[sh(\z,\y^{b-1}),y_{b},T\x_{2}]_{*}+
(-1)^{a+1}y_{1}\ot[sh(\z,\y_{2}^{b-1}),y_{b},T\x]_{*}+\\
(-1)^{a+1}z_{1}\ot[sh(\z_{2},\y^{b-1}),y_{b},T\x]_{*},
\end{multline*}
which is equal to the right-hand side of (\ref{lemmaid}).
Therefore,
$$
\<\x*\y,\z\>=\<\z*\y,\x\>.
$$
In the same way by using induction one can show that
\begin{equation}\label{univcyclic}
\oint\<\x*\y,\z\>=0.
\end{equation}
\indent
Finally we prove that
the pairing $\<\x,\y\>=(-1)^{b+1}[\x,T\y]_{*}$
is universal.
Let $Z$ be a Zinbiel algebra equipped with
an invariant bilinear form $\<.,.\>^{\p}$
and let $f:\g\to Z$ be a linear map.
We should prove that the lift
$\hat{f}$ preserves the pairing.
It suffices to consider
the case of $\<x_{1}\ot\cdots\ot x_{n-1},x_{n}\>$.
By (\ref{zinv1})-(\ref{zinv2}),
\begin{eqnarray*}
\<x_{1}\ot\cdots\ot x_{n-1},x_{n}\>
&=&\<x_{n}\ot x_{n-1},x_{1}\ot\cdots\ot x_{n-2}\>\\
&=&-\<x_{1}\ot\cdots\ot x_{n-2},x_{n}\ot x_{n-1}\>\\
&=&\cdots\\
&=&
(-1)^{n-1-i}\<x_{1}\ot\cdots\ot x_{i},x_{n}\ot\cdots\ot x_{i+1}\>\\
&=&(-1)^{n-1-i}\<\x^{i-1}\ot x_{i},T\x_{i+1}\>\\
&=&-(-1)^{n-1-i}\<T\x_{i+1}*\x^{i-1},x_{i}\>-(-1)^{n-1-i}\<x_{i}*T\x_{i+1},\x^{i-1}\>\\
&=&
-(-1)^{n-1-i}\<T\x_{i+1}*\x^{i-1},x_{i}\>-(-1)^{n-1-i}\<\x^{i-1}*T\x_{i+1},x_{i}\>\\
&=&(-1)^{n-i}\<sh(T\x_{i+1},\x^{i-1}),x_{i}\>.
\end{eqnarray*}
On the other hand, one can show that
\begin{equation*}\label{noproof04}
[x_{1},...,x_{n}]_{*}=
\sum_{i=1}^{n}(-1)^{n-i}
sh(T\x_{i+1},\x^{i-1})\ot x_{i},
\end{equation*}
where $sh(\emptyset,-)=sh(-,\emptyset)=id$.
We put
$$
(\hat{f}\ot f)\big(sh(T\x_{i+1},\x^{i-1})\ot x_{i}\big)
:=\<\hat{f}T\x_{i+1}*\hat{f}\x^{i-1}+
\hat{f}\x^{i-1}*\hat{f}T\x_{i+1},fx_{i}\>^{\p}.
$$
Then we obtain
$$
\frac{1}{n}(\hat{f}\ot f)
[x_{1},...,x_{n}]_{*}=\<\hat{f}\x,fx_{n}\>^{\p}.
$$
This means that $\<\x,\y\>=(-1)^{b+1}[\x,T\y]_{*}$
is the universal invariant bilinear form.

%%%%%%%%%%%%%%%%%%%%%%
\section{Loday world}
%%%%%%%%%%%%%%%%%%%%%%

Let $\g$ be a finite dimensional vector space
(not necessarily Leibniz algebra)
and let $F\g$ the space of anticyclic
cochains over $\g$.
Here $F\g=\bigoplus_{i\ge 2}F^{i}\g$
and $F^{i}\g$ is the space of ac $i-1$-cochains.
\begin{definition}
The triple
$\Pi\M:=(\bar{T}^{c}\Pi\g,\bar{T}\Pi\g^{*},F\g)$
is called a formal super Zinbiel manifold
or super Loday manifold,
where $\bar{T}^{c}\Pi\g$ and $\bar{T}\Pi\g^{*}$
are the cofree Zinbiel coalgebra over $\Pi\g$
and the free Zinbiel algebra over the dual space,
respectively.
\end{definition}
\begin{itemize}
\item
By definition a function or formal function
over the manifold is
an anticyclic cochain in $F\g$.
\item
A local coordinate of $\Pi\M$
is by definition a linear base of $\Pi\g^{*}$.
When $\g$ is an ordinary vector space,
the coordinate degree is odd.
\item
A vector field on $\Pi\M$ is by definition
a bar-cochain or equivalently coderivation
on the cofree coalgebra $\bar{T}^{c}\Pi\g$.
\end{itemize}
\indent
The above definition holds for any
binary quadratic cyclic or anticyclic operads.
We here give a general definition of
formal super operad-manifold.
Let $\P$ be a binary quadratic cyclic
(resp. anticyclic) operad,
let $\P^{!}$ the Koszul dual of $\P$ and
let $V$ a finite dimensional vector space.
The formal super $\P^{!}$-manifold over $V$
is the following data:\\
-- $\wt{\P}^{!c}\Pi{V}$ :
the cofree $\P^{!}$-coalgebra over $\Pi{V}$.\\
-- $\wt{\P}^{!}\Pi{V}$ :
the free $\P^{!}$-algebra over $\Pi{V}$.\\
-- $F(V,\P)$ : the space of cyclic
(resp. anticyclic) cochains over $V$ in the category
of $\P$-algebras.\\
The case of cyclic operad
was studied in \cite{Kont},
in particular when $\P=\Com,\Ass,\Lie$.\\
\indent
Let $\Pi\M$ be the super Loday manifold over $\g$
and let $(p_{i})$, $(q^{j})$ are linear bases
of $\Pi\g$ and $\Pi\g^{*}$ respectively.
The base $(q^{j})$ is a local coordinate of
the manifold.
\begin{definition}\label{defderivation}
The coordinate derivation of the function on $\Pi\M$
is defined as follows.
$$
\frac{\pa}{\pa q^{i}}
[x^{1},...,x^{n}]_{*}:=(\pm)\sum_{j=1}^{n}
x^{\sigma_{1}}\ot\cdots\ot x^{\sigma_{n-1}}
\ot\frac{\pa x^{\sigma(n)}}{\pa q^{i}}.
$$
Namely, after expansion,
the most right-component is derived.
\end{definition}
The derivation is a map of $F\g$
to the free Zinbiel algebra $\bar{T}\Pi\g$.\\
\indent
Consider the symplectic plane $\s:=\g\oplus\g^{*}$
with the symplectic structure $\omega$
defined in (\ref{defomega}).
\begin{definition}[cotangent bundle]
$\T^{*}\Pi\M:=
\big(\bar{T}^{c}\Pi\s,\bar{T}\Pi\s^{*},F\s\big)$.
\end{definition}
The canonical Poisson bracket over the cotangent bundle
is defined as follows.
\begin{definition}[Poisson bracket]
$$
\{A_{1},A_{2}\}:=
\sum_{i}(-1)^{|A_{1}|}\Big\<\frac{\pa A_{1}}{\pa p_{i}},\frac{\pa A_{2}}{\pa q^{i}}\Big\>-(-1)^{|A_{1}|}
\Big\<\frac{\pa A_{1}}{\pa q^{i}},\frac{\pa A_{2}}{\pa p_{i}}\Big\>,
$$
where $A_{1},A_{2}\in F\s$
and $\<.,.\>$ is the universal invariant bilinear form
introduced in Section 3.2.
\end{definition}
$\frac{\pa A}{\pa p_{i}}$ and $\frac{\pa A}{\pa q_{i}}$
are respectively equal to
$(\pm)A(,...,q^{i})$ and
$(\pm)A(,...,p^{i})$.
This implies that
the Poisson bracket is equivalent with
the graded Lie bracket
in Proposition \ref{poissonbracket1}.
\begin{definition}[Hamiltonian vector field]
Let $A$ be a function over $\T^{*}\Pi\M$
or anticyclic cochain over $\s$.
The coderivation $B$ defined by (\ref{defham})
is called a Hamiltonian vector field of $A$.
\end{definition}

\begin{definition}[structures]
A function, $\theta$, over $\T^{*}\Pi\M$ is called
a structure, if it is a cubic form satisfying
$\{\theta,\theta\}=0$.
A $Q$-structure is the Hamiltonian vector field
of $\theta$.
\end{definition}

Let $[.,.]$ be a binary bracket product on $\g$,
which can be extended on $\s$ via the coadjoint action
(\ref{coadaction1})-(\ref{coadaction2}).
We put
$$
\mu:=C_{ij}^{k}[q^{i},q^{j},p_{k}]_{*},
$$
where
$C_{ij}^{k}:=\omega([p_{i},p_{j}],q_{k})$.
\begin{theorem}
$\{\mu,\mu\}=0$ if and only if
$[.,.]$ is a Leibniz bracket.
\end{theorem}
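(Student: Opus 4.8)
The plan is to reduce the Maurer--Cartan equation $\{\mu,\mu\}=0$ to the operadic square condition $BB=0$ for the binary coderivation $B$ that is the Hamiltonian vector field of $\mu$, and then to invoke the fact recorded in Section 2 that, for a binary $B$, the condition $BB=0$ is exactly the Leibniz identity. So the argument splits into an identification step, a short Poisson computation, and a restriction/extension argument.

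First I would pin down the Hamiltonian vector field of $\mu$. By (\ref{defham}) the cubic form $\mu$ corresponds to a binary bar-cochain $B$ on $\s=\g\oplus\g^{*}$ via $\mu=(-1)^{|B|}\omega(B,-)=-\omega(B,-)$, since $|B|=i+1=3$ is odd. I would then evaluate $B$ on the basis of $\s$ directly from $\mu=C_{ij}^{k}[q^{i},q^{j},p_{k}]_{*}$, using the coordinate derivation (Definition \ref{defderivation}) and the universal pairing of Section 3.2, and check that it reproduces the given bracket $[.,.]$ on $\g$ together with its coadjoint extension (\ref{coadaction1})--(\ref{coadaction2}) to $\s$. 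The choice $C_{ij}^{k}=\omega([p_{i},p_{j}],q_{k})$ forces $B(p_{i},p_{j})=[p_{i},p_{j}]$, while the $\omega$-invariance carried by every anticyclic cochain pins the mixed components of $B$ to be precisely the coadjoint action. This identification, with its sign bookkeeping through the dual commutator $[\,\cdot,\dots,\cdot\,]_{*}$, is the part I expect to be the main obstacle; everything afterwards is formal.

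Granting this, the computation of $\{\mu,\mu\}$ is immediate from (\ref{defAA}). Since $\mu$ is an anticyclic $2$-cochain we have $i=j=2$, so $\{\mu,\mu\}=(-1)^{i+j}\omega([B,B],-)=\omega([B,B],-)$. Because $B$ is odd, its graded self-commutator is $[B,B]=BB-(-1)^{|B||B|}BB=2BB$, whence $\{\mu,\mu\}=2\,\omega(BB,-)$. The Proposition preceding Proposition \ref{poissonbracket1} already guarantees that this is a genuine anticyclic $3$-cochain, so its vanishing is equivalent, by the nondegeneracy of $\omega$, to $BB=0$ on $\s$: indeed $\omega(BB(x_{1},x_{2},x_{3}),x_{4})=0$ for all $x_{4}$ forces $BB(x_{1},x_{2},x_{3})=0$.

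It remains to match $BB=0$ on $\s$ with the Leibniz identity for $[.,.]$ on $\g$, which I would do in two directions. For the ``only if'' direction, $BB=0$ says the extended bracket $B$ is a Leibniz bracket on $\s$; restricting all arguments to $\g$, where $[\g,\g]\subseteq\g$ keeps every term inside $\g$, yields the Leibniz identity for $[.,.]$. For the ``if'' direction, if $[.,.]$ is a Leibniz bracket on $\g$ then the semi-direct product $\g\ltimes\g^{*}$ with the coadjoint action is a Leibniz algebra, by the fact recorded in Section 2, so $B$ is Leibniz on $\s$ and hence $BB=0$, giving $\{\mu,\mu\}=0$. Combining the identification of $B$, the Poisson computation, and this equivalence establishes the theorem.
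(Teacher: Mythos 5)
Your proposal is correct, but it follows a genuinely different route from the paper's. The paper argues entirely in coordinates: it expands $\mu=C_{ij}^{k}[q^{i},q^{j},p_{k}]_{*}$ into tensor monomials, computes $\pa\mu/\pa p_{a}$ and $\pa\mu/\pa q^{a}$, uses the invariance relations (\ref{zinv1})--(\ref{zinv2}) of the universal pairing to rewrite every pairing in terms of $\<q^{i}q^{j}q^{k},p_{l}\>$, and identifies the resulting coefficient $L_{ijk}^{l}=C_{jk}^{a}C_{ia}^{l}-C_{ik}^{a}C_{ja}^{l}-C_{ij}^{a}C_{ak}^{l}$ as the Leibniz defect of the structure constants; the converse direction follows by isolating the coefficient of the linearly independent monomials $q^{i}\ot q^{j}\ot q^{k}\ot p_{l}$. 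You instead pass to Hamiltonian vector fields: $\{\mu,\mu\}=\omega([B,B],-)=2\,\omega(BB,-)$ by (\ref{defAA}), nondegeneracy of $\omega$ gives $\{\mu,\mu\}=0\Leftrightarrow BB=0$ on $\s$, and Section 2's coderivation characterization of Leibniz algebras together with the semidirect-product/restriction argument moves the statement between $\s$ and $\g$. Your version is more conceptual: it explains the paper's remark that $\mu$ ``characterizes'' $\g\ltimes\g^{*}$, and it extends verbatim to the twisted structure $\theta_{Leib}$, which the coordinate computation would have to redo. What it costs is twofold. First, the theorem is stated for the partial-derivative Poisson bracket of Section 4, so you are implicitly invoking the paper's asserted (not fully proved) equivalence of that bracket with (\ref{defAA}); the paper's own proof does not need this. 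Second, the step you defer --- that the Hamiltonian vector field of $\mu$ is exactly the coadjoint extension (\ref{coadaction1})--(\ref{coadaction2}) of $[.,.]$, including the vanishing of the $\g^{*}\ot\g^{*}$ component --- is where essentially all of the paper's computational work reappears. Your outline of that step (the coefficients $C_{ij}^{k}$ fix the $\g\ot\g$ component, bidegree $(1,2)$ kills $B$ on $\g^{*}\ot\g^{*}$, and anticyclicity of $\mu$ plus nondegeneracy of $\omega$ force the mixed components to be the coadjoint action) is the right one, so this is a gap of execution rather than of ideas; but be aware that the sign bookkeeping through $[\cdot,\dots,\cdot]_{*}$ that you postpone is comparable in bulk to the paper's entire direct computation.
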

\begin{proof}
The proof is by a direct computation.
We denote $x\ot y$ by shortly $xy$.
Then
$$
\mu=C_{ij}^{k}q^{i}q^{j}p_{k}+C_{ij}^{k}q^{i}p_{k}q^{j}
-C_{ij}^{k}p_{k}q^{i}q^{j}-C_{ij}^{k}p_{k}q^{j}q^{i}.
$$
It suffices to compute
$\<\frac{\pa\mu}{\pa p_{a}},\frac{\pa\mu}{\pa q^{a}}\>$.
By the definition of the derivation,
\begin{eqnarray*}
\frac{\pa\mu}{\pa p_{a}}&=&C_{ij}^{a}q^{i}q^{j}\\
\frac{\pa\mu}{\pa q^{a}}&=&C_{ia}^{k}q^{i}p_{k}
-C_{ia}^{k}p_{k}q^{i}-C_{aj}^{k}p_{k}q^{j}\\
&=&
C_{ka}^{l}q^{k}p_{l}
-C_{ka}^{l}p_{l}q^{k}
-C_{ak}^{l}p_{l}q^{k}
\end{eqnarray*}
The first pairing is
$\<C_{ij}^{a}q^{i}q^{j},C_{ka}^{l}q^{k}p_{l}\>
=C_{ij}^{a}C_{ka}^{l}\<q^{i}q^{j},q^{k}p_{l}\>$.
By the invariant condition,
\begin{eqnarray*}
\<q^{i}q^{j},q^{k}p_{l}\>&=&-\<q^{k}p_{l},q^{i}q^{j}\>\\
&=&\<(q^{i}q^{j})q^{k},p_{l}\>-\<p_{l}(q^{i}q^{j}),q^{k}\>\\
&=&\<q^{i}q^{j}q^{k},p_{l}\>+\<q^{k}(q^{i}q^{j}),p_{l}\>\\
&=&\<q^{i}q^{j}q^{k},p_{l}\>+\<q^{k}q^{i}q^{j},p_{l}\>
-\<q^{i}q^{k}q^{j},p_{l}\>.
\end{eqnarray*}
Hence
\begin{eqnarray*}
\<C_{ij}^{a}q^{i}q^{j},C_{ka}^{l}q^{k}p_{l}\>
&=&
C_{ij}^{a}C_{ka}^{l}\<q^{i}q^{j}q^{k},p_{l}\>+
C_{ij}^{a}C_{ka}^{l}\<q^{k}q^{i}q^{j},p_{l}\>
-C_{ij}^{a}C_{ka}^{l}\<q^{i}q^{k}q^{j},p_{l}\>\\
&=&
C_{ij}^{a}C_{ka}^{l}\<q^{i}q^{j}q^{k},p_{l}\>+
C_{jk}^{a}C_{ia}^{l}\<q^{i}q^{j}q^{k},p_{l}\>
-C_{ik}^{a}C_{ja}^{l}\<q^{i}q^{j}q^{k},p_{l}\>.
\end{eqnarray*}
In the same way,
\begin{eqnarray*}
\<C_{ij}^{a}q^{i}q^{j},-C_{ka}^{l}p_{l}q^{k}\>
&=&-C_{ij}^{a}C_{ka}^{l}\<q^{i}q^{j},p_{l}q^{k}\>\\
&=&C_{ij}^{a}C_{ka}^{l}\<p_{l}q^{k},q^{i}q^{j}\>\\
&=&-C_{ij}^{a}C_{ka}^{l}\<q^{i}q^{j}q^{k},p_{l}\>
\end{eqnarray*}
and
\begin{eqnarray*}
\<C_{ij}^{a}q^{i}q^{j},-C_{ak}^{l}p_{l}q^{k}\>
=-C_{ij}^{a}C_{ak}^{l}\<q^{i}q^{j}q^{k},p_{l}\>.
\end{eqnarray*}
We obtain
\begin{eqnarray*}
\<\frac{\pa\mu}{\pa p_{a}},\frac{\pa\mu}{\pa q^{a}}\>
&=&C_{jk}^{a}C_{ia}^{l}\<q^{i}q^{j}q^{k},p_{l}\>
-C_{ik}^{a}C_{ja}^{l}\<q^{i}q^{j}q^{k},p_{l}\>
-C_{ij}^{a}C_{ak}^{l}\<q^{i}q^{j}q^{k},p_{l}\>\\
&=&(C_{jk}^{a}C_{ia}^{l}
-C_{ik}^{a}C_{ja}^{l}
-C_{ij}^{a}C_{ak}^{l})
\<q^{i}q^{j}q^{k},p_{l}\>\\
&=&([i,[j,k]]-[j,[i,k]]-[[i,j],k])\<q^{i}q^{j}q^{k},p_{l}\>.
\end{eqnarray*}
Therefore, if $[.,.]$ is a Leibniz bracket,
then $\{\mu,\mu\}=0$.
We put
$$
L_{ijk}^{l}:=C_{jk}^{a}C_{ia}^{l}
-C_{ik}^{a}C_{ja}^{l}
-C_{ij}^{a}C_{ak}^{l}
$$
By the definition of the pairing,
\begin{eqnarray*}
\<q^{i}q^{j}q^{k},p_{l}\>&=&[q^{i},q^{j},q^{k},p_{l}]_{*}\\
&=&q^{i}\ot[q^{j},q^{k},p_{l}]_{*}
+p_{l}\ot[q^{i},q^{j},q^{k}]_{*}\\
&\cdots&\cdots\\
&=&q^{i}\ot q^{j}\ot q^{k}\ot p_{l}+\cdots
\end{eqnarray*}
If $\{\mu,\mu\}=0$, then
$$
L_{ijk}^{l}q^{i}\ot q^{j}\ot q^{k}\ot p_{l}=0,
$$
which implies that $[.,.]$ is Leibniz.
\end{proof}
The function $\mu$ is a structure which
characterizes the semi-direct product
Leibniz algebra $\g\ltimes\g^{*}$.
More generally, when $\g\oplus\g^{*}$ is an Abelian
extension of $\g$ by $\g^{*}$,
the structure has the following form,
$$
\theta_{Leib}:=
C_{ij}^{k}[q^{i},q^{j},p_{k}]_{*}
+\frac{1}{3}H_{ijk}[q^{i},q^{j},q^{k}]_{*}
$$
and $\{\theta_{Leib},\theta_{Leib}\}=0$ if and only if
the twisted bracket
$$
[x_{1}+a_{1},x_{2}+a_{2}]
=[x_{1},x_{2}]+[x_{1},a_{2}]+[a_{1},x_{2}]+H(x_{1},x_{2})
$$
is a Leibniz bracket,
where $H:=\frac{1}{3}H_{ijk}[q^{i},q^{j},q^{k}]_{*}$.
\begin{definition}
Let $\mu$ be the structure defined above.
We put $b_{\mu}:=\{\mu,-\}$.
This becomes a coboundary operator on $F\g$.
The pair $(F\g,b_{\mu})$ is an anticyclic cohomology
complex over $\g$.
\end{definition}
Finally we study a metric tensor on $\g$.
An invariant bilinear form in the category of
Lie algebras is a symmetric tensor $g(.,.)$
satisfying the well-known condition,
$$
g(x_{1},[x_{2},x_{3}])=g([x_{1},x_{2}],x_{3}),
$$
where $[.,.]$ is an ordinary Lie bracket.
\begin{definition}
Let $g(.,.)$ be a symmetric bilinear form on $\g$.
We call $g$ a generalized symmetric invariant
bilinear form, if
\begin{equation}\label{gsi}
g([x_{1},x_{2}],x_{3})+g(x_{2},[x_{1},x_{3}])
=g(x_{1},x_{2}\c x_{3}),
\end{equation}
where $x_{2}\c x_{3}:=[x_{2},x_{3}]+[x_{3},x_{2}]$.
\end{definition}
If $\g$ is a Lie algebra as a commutative Leibniz algebra,
then (\ref{gsi}) is equal to the classical invariant
condition above. In general, a symmetric bilinear form
on $\g$ is a function over the cotangent bundle,
$$
g=\frac{1}{2}g_{ij}[q^{i},q^{j}]_{*}.
$$
The bilinear form $g$ is identified with
a linear map $\ti{g}:\g\to\g^{*}$
and satisfies (\ref{gsi})
if and only if the graph of $\ti{g}$ is a subalgebra
of the semi-direct product Leibniz algebra $\g\ltimes\g^{*}$.
\begin{corollary}
$g$ satisfies (\ref{gsi}) if and only if
$b_{\mu}g=\{\mu,g\}=0$.
\end{corollary}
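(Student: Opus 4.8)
The plan is to compute $\{\mu,g\}$ by a direct calculation modeled exactly on the proof of the previous theorem ($\{\mu,\mu\}=0\iff$ Leibniz), and then to read off the vanishing condition from the coefficients of the free tensors, just as was done there. Throughout I work in the even-parity convention used in that theorem and abbreviate $x\ot y$ by $xy$. The key simplification is that $g=\frac{1}{2}g_{ij}[q^{i},q^{j}]_{*}=\frac{1}{2}g_{ij}(q^{i}q^{j}+q^{j}q^{i})$ involves only the coordinates $q^{i}$, so $\frac{\pa g}{\pa p_{a}}=0$ and only the first summand of the Poisson bracket survives:
$$
\{\mu,g\}=(-1)^{|\mu|}\sum_{a}\Big\<\frac{\pa\mu}{\pa p_{a}},\frac{\pa g}{\pa q^{a}}\Big\>.
$$
From the previous theorem I already have $\frac{\pa\mu}{\pa p_{a}}=C_{ij}^{a}q^{i}q^{j}$, while deriving the right-most component of $g$ and using $g_{ij}=g_{ji}$ gives $\frac{\pa g}{\pa q^{a}}=g_{la}q^{l}$. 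Substituting and using $\<q^{i}q^{j},q^{l}\>=[q^{i},q^{j},q^{l}]_{*}$ yields
$$
\{\mu,g\}=(-1)^{|\mu|}\,C_{ij}^{a}g_{la}\,[q^{i},q^{j},q^{l}]_{*}.
$$

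Next I would expand the dual bracket exactly as $\mu$ was expanded in the previous theorem, namely $[q^{i},q^{j},q^{l}]_{*}=q^{i}q^{j}q^{l}+q^{i}q^{l}q^{j}-q^{l}q^{i}q^{j}-q^{l}q^{j}q^{i}$, and collect the coefficient of each free monomial $q^{\a}q^{\b}q^{\gamma}$ in $\bar{T}\Pi\g$. Writing $M_{ijl}:=C_{ij}^{a}g_{la}$, the four summands contribute with the index patterns $(\a\b\gamma),(\a\gamma\b),(\b\gamma\a),(\gamma\b\a)$, so the coefficient of $q^{\a}q^{\b}q^{\gamma}$ equals $M_{\a\b\gamma}+M_{\a\gamma\b}-M_{\b\gamma\a}-M_{\gamma\b\a}$. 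Since the monomials $q^{\a}q^{\b}q^{\gamma}$ are linearly independent and $(-1)^{|\mu|}$ is a unit, $\{\mu,g\}=0$ is equivalent to
$$
M_{\a\b\gamma}+M_{\a\gamma\b}-M_{\b\gamma\a}-M_{\gamma\b\a}=0\qquad\text{for all }\a,\b,\gamma.
$$

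Finally I would evaluate (\ref{gsi}) on the basis $x_{1}=p_{i},x_{2}=p_{j},x_{3}=p_{k}$. Using $[p_{i},p_{j}]=C_{ij}^{a}p_{a}$ and $g_{ij}=g(p_{i},p_{j})$, the left side becomes $C_{ij}^{a}g_{ak}+C_{ik}^{a}g_{ja}$ and the right side $g(p_{i},p_{j}\c p_{k})=C_{jk}^{a}g_{ia}+C_{kj}^{a}g_{ia}$; invoking $g_{ij}=g_{ji}$ these are precisely $M_{ijk}+M_{ikj}$ and $M_{jki}+M_{kji}$, so (\ref{gsi}) is exactly $M_{ijk}+M_{ikj}-M_{jki}-M_{kji}=0$, coinciding with the relation extracted above (take $(\a,\b,\gamma)=(i,j,k)$). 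Thus the two conditions agree term by term, which gives the equivalence. I expect the only delicate point to be the index bookkeeping in this last comparison: one must use the symmetry $g_{ij}=g_{ji}$ at the right moment to rewrite $C_{ij}^{a}g_{ak}$ as $M_{ijk}=C_{ij}^{a}g_{ka}$, and keep careful track of which permutation of indices each of the four terms of $[q^{i},q^{j},q^{l}]_{*}$ produces; once the indices are aligned the identification is immediate. Conceptually this is consistent with the remark recalled just before the corollary, that (\ref{gsi}) holds if and only if the graph of $\ti{g}$ is a subalgebra of $\g\ltimes\g^{*}$.
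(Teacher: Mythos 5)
Your computation is correct, but note that the paper itself gives no proof of this corollary at all: it is stated as an immediate consequence of the preceding material, the only hint being the remark that $g$ satisfies (\ref{gsi}) if and only if the graph of $\ti{g}$ is a subalgebra of $\g\ltimes\g^{*}$. So what you have done is supply the missing argument, and you do it in exactly the computational style of the paper's proof of the theorem that $\{\mu,\mu\}=0$ iff $[.,.]$ is Leibniz. Your steps are consistent with the paper's conventions: since $g$ involves only the $q^{i}$, one has $\pa g/\pa p_{a}=0$ and only one term of the Poisson bracket survives (the analogue of the paper's ``it suffices to compute'' reduction); $\pa g/\pa q^{a}=g_{la}q^{l}$ follows from deriving the right-most slot of $[q^{i},q^{j}]_{*}=q^{i}q^{j}+q^{j}q^{i}$ together with $g_{ij}=g_{ji}$; the pairing $\langle q^{i}q^{j},q^{l}\rangle$ equals $[q^{i},q^{j},q^{l}]_{*}$ directly from (\ref{univpairing}) with $b=1$, so, unlike in the theorem's proof, no invariance manipulations are needed; the expansion $[q^{i},q^{j},q^{l}]_{*}=q^{i}q^{j}q^{l}+q^{i}q^{l}q^{j}-q^{l}q^{i}q^{j}-q^{l}q^{j}q^{i}$ agrees with the paper's own expansion of $\mu$; and your coefficient condition $M_{\alpha\beta\gamma}+M_{\alpha\gamma\beta}-M_{\beta\gamma\alpha}-M_{\gamma\beta\alpha}=0$ is precisely (\ref{gsi}) evaluated on basis vectors, since $g([p_{i},p_{j}],p_{k})+g(p_{j},[p_{i},p_{k}])=M_{ijk}+M_{ikj}$ while $g(p_{i},p_{j}\c p_{k})=M_{jki}+M_{kji}$. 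Compared with the route the paper gestures at (identifying $\{\mu,g\}=0$ with the graph of $\ti{g}$ being a Leibniz subalgebra, which requires unwinding the coadjoint actions (\ref{coadaction1})--(\ref{coadaction2})), your direct coefficient extraction is self-contained, and it has the extra merit of exhibiting $\{\mu,g\}$ explicitly as the cochain whose coefficients measure the failure of generalized invariance; the only points demanding care, which you handle correctly, are the linear independence of the tensor monomials $q^{\alpha}q^{\beta}q^{\gamma}$ in the free Zinbiel algebra and the use of the symmetry of $g$ at the right moments.
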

\indent
Suppose that $g$ is nondegenerate
(i.e. pseudo-Euclidean metric).
The inverse $g^{-1}$ is also a function over $\T^{*}\Pi\M$,
$$
g^{-1}=\frac{1}{2}g^{ij}[p_{i},p_{j}]_{*}.
$$
We denote by $X_{g^{-1}}$ the Hamiltonian vector field
of $g^{-1}$.
The canonical transformation of $\mu$ by
the Hamiltonian flow $exp(X_{g^{-1}})$ is computed as follows.
$$
exp(X_{g^{-1}})(\mu)=\mu+\{\mu,g^{-1}\}
+\frac{1}{2}\{\{\mu,g^{-1}\},g^{-1}\}.
$$
If $g$ satisfies (\ref{gsi}), then
$exp(X_{g^{-1}})(\mu)=\mu+\{\mu,g^{-1}\}$,
and vice versa.
In that case,
$\nu:=\{\mu,g^{-1}\}$ is the second structure
and $\mu+\nu$ defines a Drinfeld double
in the Loday world.

\begin{verbatim}
Kyousuke UCHINO
email:kuchinon@gmail.com
\end{verbatim}

\begin{thebibliography}{}
\bibitem{Chap}
F. Chapoton.
{\em On some anticyclic operads}.
Algebr. Geom. Topol. 5 (2005), 53--69.
\bibitem{Hitchin}
N. Hitchin.
{\em Brackets, forms and invariant functionals}.
arXiv:math/0508618.
\bibitem{GetKap}
E. Getzler and M. Kapranov.
{\em Cyclic operads and cyclic homology}.
Geometry, topology, physics,
Conf. Proc. Lecture Notes Geom.
Topology, IV, Int. (1995), 167--201.
\bibitem{GK}
V. Ginzburg and M. Kapranov.
{\em Koszul duality for operads}.
Duke Math. J. 76 (1994), no. 1, 203--272.
{\em Erratum to: ``Koszul duality for operads''}.
Duke Math. J. 80 (1995), no. 1, 293.
\bibitem{Kont}
M. Kontsevich.
{\em Formal (Non)-Commutative Symplectic Geometry}.
The Gelfand Mathematical Seminars,
1990-1992 (1993), 173--187.
\bibitem{KosmannS}
Y. Kosmann-Schwarzbach.
{\em Jacobi quasi-bialgebras and quasi-Poisson Lie groups}.
Contemporary Mathematics. 132.
(1992) 459--489.
%\bibitem{Kos1}
%Y. Kosmann-Schwarzbach.
%{\em From Poisson algebras to Gerstenhaber algebras}.
%Ann. Inst. Fourier (Grenoble).
%(1996) 1243--1274.
%\bibitem{Kos2}
%Y. Kosmann-Schwarzbach.
%{\em Derived brackets}.
%Lett. Math. Phys.
%(2004) 61--87.
%\bibitem{Loday}
%J-L. Loday.
%{\em Une version non commutative des algebres de Lie:
%les algebres de Leibniz}.
%Enseign. Math. (1993), (2) 39 (3–4), 269--293.
\bibitem{LP}
J-L. Loday and T. Pirashvili.
{\em Universal enveloping algebras of Leibniz algebras and (co)homology}.
Math. Ann. 296 (1993), no. 1, 139--158.
%\bibitem{Lod3}
%J-L. Loday.
%{\em La renaissance des operades}.
%Seminaire Bourbaki, Vol. 1994/95. Asterisque No. 237 (1996)
%Exp. No. 792, 3, 47--74
\bibitem{Loday}
J-L. Loday.
{\em Dialgebras}.
Lecture Notes in Mathematics 1763.
\bibitem{Roy}
D. Roytenberg.
{\em AKSZ-BV formalism and Courant algebroid-induced topological field theories}.
Lett. Math. Phys. 79(2) (2007), 143--159.
arXiv:hep-th/0608150
\bibitem{Uchi}
K. Uchino.
{\em
Derived bracket construction and anticyclic subcomplex
of Leibniz (co)homology complex}.
arXiv:1312.7268v1

\end{thebibliography}
\end{document}